\newtheorem{theorem}{Theorem}
\newtheorem{remark}[theorem]{Remark}
\newtheorem{lemma}[theorem]{Lemma}
\newtheorem{proposition}[theorem]{Proposition}
\newtheorem{definition}[theorem]{Definition}
\newtheorem{example}[theorem]{Example}
\DeclareMathOperator*{\loc}{loc}
\DeclareMathOperator*{\dx}{d\textit{x}}
\DeclareMathOperator*{\dr}{d\textit{r}}
\newcommand{\R}{\mathbb{R}}
\newcommand*\diff{\mathop{}\!\mathrm{d}}
\newcommand{\h}{\ensuremath{\mathcal{H}}}
\newcommand{\hlog}{\ensuremath{\mathcal{H}_{\log}}}
\newcommand{\Lp}[1]{L^{#1}(\Omega)}
\newcommand{\Wp}[1]{W^{1,#1}(\Omega)}
\newcommand{\Wpzero}[1]{W^{1,#1}_0(\Omega)}
\newcommand{\ph}{\varphi}
\newcommand{\into}{\int_{\Omega}}
\newcommand{\weak}{\rightharpoonup}
\newcommand{\close}{\overline{\Omega}}
\newcommand{\WHzero}{W^{1, \h}_0(\Omega)}
\newcommand{\WHlogzero}{W^{1, \hlog}_0(\Omega)}
\newcommand{\Wph}{W^{1, \ph}(\Omega)}
\newcommand{\Wphzero}{W^{1, \ph}_0(\Omega)}
\newcommand{\Lph}{L^{\ph}(\Omega)}
\def\abs#1{\left|{#1}\right|}
\def\norm#1{\left\|#1\right\|}
\def\normph#1{\ensuremath{\left\|#1\right\|_{\ph}}}
\def\modph#1{\ensuremath{\varrho_{\ph} \left(#1\right)}}
\def\normoneph#1{\ensuremath{\left\|#1\right\|_{1,\ph}}}
\def\normonephzero#1{\ensuremath{\left\|#1\right\|_{1,\ph,0}}}
\numberwithin{theorem}{section}
\numberwithin{equation}{section}
\title[Monotonicity formulas and \texorpdfstring{(S$_+$)}{(S+)}-property]{Monotonicity formulas and \texorpdfstring{(S$_+$)}{(S+)}-property:\\ old and new}
\author[\'{A}ngel Crespo-Blanco]{\'{A}ngel Crespo-Blanco*}
\address[\'{A}ngel Crespo-Blanco]{Technische Universit\"{a}t Berlin, Institut f\"{u}r Mathematik\\ Stra\ss e des 17.\,Juni 136, 10623 Berlin, Germany}
\email{crespo@math.tu-berlin.de}
\subjclass{35A16, 35J20, 35J25, 35J62, 46E30, 47H05}
\keywords{Musielak-Orlicz space, generalized Young inequality, \texorpdfstring{(S$_+$)}{(S+)}-property, monotone operator, operator with general growth}
\thanks{*:Corresponding author.}
\begin{document}

\begin{abstract}
	The connection between monotonicity formulas and the \texorpdfstring{(S$_+$)}{(S+)}-property is that, for some popular differential operators, the former is used to prove the latter. The purpose of this paper is to explore this connection, remark how in the past both the monotonicity formulas and the \texorpdfstring{(S$_+$)}{(S+)}-property were focused on power-law growth, and prove the same type of results for a more general class of operators.
\end{abstract}

\maketitle

%********************************************************************
\section{Power-law case}\label{Power-law_case}
%********************************************************************

Monotonicity formulas have been a crucial tool for the study of partial differential equations and calculus of variations during the last half of a century. The reason behind it is that, to study equations which are not linear (or functionals which are not quadratic), this tool allowed to adapt arguments from the linear equations to these new settings. The other side of this story is that only a specific kind of nonlinearity was considered, that is, those with growth of a power-law, which materialized in equations driven by the $p$-Laplacian operator. The most popular version of these inequalities can be found in classical texts like \cite[equation (2.2)]{Simon-1978} or \cite[Chapter 12]{Lindqvist-2019}. For completeness of this text, we also include it in the following lines. The proof is based on the arguments of \cite[equation (2.2)]{Simon-1978} or \cite[Chapter 12]{Lindqvist-2019}.

\begin{lemma}
	\label{Le:MonotoneInequality}
	Let $r > 1$, for any $\xi, \eta \in \R^N$,
	\begin{align*}
		\left( \abs{\xi}^{r-2} \xi - \abs{\eta}^{r-2} \eta \right)
		\cdot \left( \xi - \eta \right) \geq
		C_r \abs{ \xi - \eta }^r 
	\end{align*}
	if $r \geq 2$, and
	\begin{align*}
		\left( \abs{\xi} + \abs{\eta} \right)^{2 - r} \left(  \abs{\xi}^{r-2} \xi - \abs{\eta}^{r-2} \eta \right)
		\cdot \left( \xi - \eta \right)  \geq
		C_r \abs{\xi - \eta}^2 
	\end{align*}
	if $1 < r < 2$, where 
	\begin{align*}
		C_r =
		\begin{cases}
			\min \{ 2^{2-r}, 2^{-1} \} & \text{if } r \geq 2,  \\
			r-1                        & \text{if } 1 < r < 2.
		\end{cases}
	\end{align*}
	The constant $C_r$ might not be optimal.
\end{lemma}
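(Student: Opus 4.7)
My plan is to handle the two regimes of $r$ separately, by different methods.

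For $r\ge 2$, I would start from the algebraic identity obtained by direct expansion,
\begin{equation*}
2\bigl(|\xi|^{r-2}\xi-|\eta|^{r-2}\eta\bigr)\cdot(\xi-\eta)
= \bigl(|\xi|^{r-2}+|\eta|^{r-2}\bigr)|\xi-\eta|^2 + \bigl(|\xi|^{r-2}-|\eta|^{r-2}\bigr)\bigl(|\xi|^2-|\eta|^2\bigr).
\end{equation*}
The second summand is nonnegative when $r\ge 2$, since its two factors share the sign of $|\xi|-|\eta|$, and can be discarded. It then remains to bound the first summand from below by a constant times $|\xi-\eta|^r$, and the split in the value of $C_r$ appears at precisely this step. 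For $2\le r\le 3$ the concavity of $t\mapsto t^{r-2}$ on $[0,\infty)$ gives the subadditivity $(a+b)^{r-2}\le a^{r-2}+b^{r-2}$; together with $|\xi-\eta|\le|\xi|+|\eta|$ this yields $|\xi|^{r-2}+|\eta|^{r-2}\ge|\xi-\eta|^{r-2}$ and hence $C_r=1/2$. For $r\ge 3$ the convexity of the same map gives $(a+b)^{r-2}\le 2^{r-3}(a^{r-2}+b^{r-2})$, so one obtains instead $|\xi|^{r-2}+|\eta|^{r-2}\ge 2^{3-r}|\xi-\eta|^{r-2}$ and hence $C_r=2^{2-r}$.

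For the range $1<r<2$, I would use an integral representation. Setting $F(z)=|z|^{r-2}z$ and $z_t=\eta+t(\xi-\eta)$, the derivative formula $DF(z)_{ij}=|z|^{r-2}\delta_{ij}+(r-2)|z|^{r-4}z_iz_j$ together with the fundamental theorem of calculus applied to $t\mapsto F(z_t)\cdot(\xi-\eta)$ yields
\begin{equation*}
\bigl(F(\xi)-F(\eta)\bigr)\cdot(\xi-\eta)=\int_0^1 \Bigl[\,|z_t|^{r-2}|\xi-\eta|^2+(r-2)|z_t|^{r-4}\bigl(z_t\cdot(\xi-\eta)\bigr)^2\,\Bigr]\,dt.
\end{equation*}
Since $r-2<0$, combining the Cauchy--Schwarz inequality $(z_t\cdot(\xi-\eta))^2\le|z_t|^2|\xi-\eta|^2$ with the negative sign of $r-2$ bounds the integrand pointwise below by $(r-1)|z_t|^{r-2}|\xi-\eta|^2$. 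Finally, $|z_t|\le|\xi|+|\eta|$ and $r-2<0$ give $|z_t|^{r-2}\ge(|\xi|+|\eta|)^{r-2}$; integrating and multiplying through by $(|\xi|+|\eta|)^{2-r}$ delivers the target inequality with $C_r=r-1$.

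The main technical subtlety sits in this second case: $|z_t|^{r-2}$ blows up whenever the segment from $\eta$ to $\xi$ meets the origin, so the derivative formula above is not pointwise valid there. I would handle this by first proving the inequality on the open dense set of pairs $(\xi,\eta)$ for which $z_t\ne 0$ on $[0,1]$---where $F$ is smooth along the segment---and then passing to the limit by continuity of both sides in $(\xi,\eta)$. Since $|z_t|$ vanishes at most linearly along the segment, the resulting singularity is of order $|t-t_*|^{r-2}$ with $r-2>-1$ and therefore integrable on $[0,1]$, which supports the limiting argument. Beyond this point, only routine bookkeeping of constants remains.
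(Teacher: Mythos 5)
Your proof is correct. For $r\ge 2$ you use exactly the paper's device: the identity splitting the product into $\tfrac12\left(|\xi|^{r-2}+|\eta|^{r-2}\right)|\xi-\eta|^2$ plus a second term that is nonnegative when $r\ge 2$, and you additionally supply the detail the paper leaves implicit (``the inequality follows from the first term''), namely that $|\xi|^{r-2}+|\eta|^{r-2}\ge \min\{2^{3-r},1\}\,|\xi-\eta|^{r-2}$ by subadditivity of $t\mapsto t^{r-2}$ when $2\le r\le 3$ and by its convexity when $r\ge 3$, which is precisely what produces $C_r=\min\{2^{2-r},2^{-1}\}$. For $1<r<2$ your route is genuinely different: the paper exploits rotation invariance and homogeneity to normalize to $|\xi|=1\ge|\eta|$ in a two-dimensional frame and then runs a case distinction on the sign of $\eta_1$ with the mean value theorem, whereas you integrate the Jacobian of $F(z)=|z|^{r-2}z$ along the segment $z_t=\eta+t(\xi-\eta)$ and use Cauchy--Schwarz together with the sign of $r-2$ to bound the integrand below by $(r-1)|z_t|^{r-2}|\xi-\eta|^2$; both yield $C_r=r-1$. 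Your argument is the classical integral-representation one (as in Simon or Lindqvist) and is essentially the same mechanism behind the Diening--Ettwein inequality of Lemma \ref{Le:GeneralizedMono} used later in the paper, so it extends naturally beyond power laws, while the paper's reduction is more elementary (no differentiation of the vector field, no singular integrand) but is tied to the homogeneity and rotational symmetry specific to powers. Your handling of the degenerate segment is adequate: both sides are continuous in $(\xi,\eta)$ since $r-1>0$, and the set of pairs whose segment avoids the origin is open and dense for $N\ge 2$, which is the paper's setting; in dimension one that set is not dense, but there your integrability observation ($|t-t_*|^{r-2}$ with $r-2>-1$) lets you apply the fundamental theorem of calculus directly to the absolutely continuous map $t\mapsto F(z_t)\cdot(\xi-\eta)$, so nothing is lost.
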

\begin{proof}
	For the case $r \geq 2$, we obtain the identity
	\begin{align*}
		& \left( \abs{\xi}^{r-2} \xi - \abs{\eta}^{r-2} \eta \right)
		\cdot \left( \xi - \eta \right) \\
		& = \left(  \abs{\xi}^{r-2} \xi \right)
		\cdot \left( \xi - \eta \right) +
		\left( - \abs{\eta}^{r-2} \eta \right)
		\cdot \left( \xi - \eta \right) \\
		& = \abs{\xi}^{r-2} \left( \frac{1}{2} \xi - \frac{1}{2} \eta \right) \cdot \left( \xi - \eta \right)
		+ \abs{\eta}^{r-2} \left( \frac{1}{2} \xi - \frac{1}{2} \eta \right) \cdot \left( \xi - \eta \right)        \\
		& \quad + \abs{\xi}^{r-2} \left( \frac{1}{2} \xi + \frac{1}{2} \eta \right) \cdot \left( \xi - \eta \right)
		+ \abs{\eta}^{r-2} \left( - \frac{1}{2} \xi - \frac{1}{2} \eta \right) \cdot \left( \xi - \eta \right)      \\
		& = \frac{1}{2} \left( \abs{\xi}^{r-2} + \abs{\eta}^{r-2} \right)
		\abs{ \xi - \eta }^2 
		+ \frac{1}{2} \left( \abs{\xi}^{r-2} - \abs{\eta}^{r-2} \right)
		\left( \abs{\xi}^2 - \abs{\eta}^2 \right) .
	\end{align*}
	Since $r \geq 2$, the second term is nonnegative and the inequality follows from the first term.
	
	For the case $1 < r < 2$, we note that the expressions are invariant under rotations and homogeneous with the same degree in both sides. Hence it is enough to consider the case $1=\abs{\xi} \geq \abs{\eta}$, $\xi = e_1$, $\eta= \eta_1 e_1 + \eta_2 e_2$. We split the argument in two cases. First, if $\eta_1 \leq 0$,
	\begin{align*}
		\left( \abs{\xi}^{r-1} - \abs{\eta}^{r-2} \eta_1 \right)
		\geq \abs{\xi}^{r-2} \left( \abs{\xi} - \eta_1 \right)
	\end{align*}
	and, if $0 \leq \eta_1 \leq \abs{\xi}$, by the mean value theorem,
	\begin{align*}
		\left( \abs{\xi}^{r-1} - \abs{\eta}^{r-2} \eta_1 \right)
		& \geq \left( \abs{\xi}^{r-1} - \eta_1^{r-1} \right) \geq (r-1) \abs{\xi}^{r-2} \left( \abs{\xi} - \eta_1 \right).
	\end{align*}
	Altogether, this yields
	\begin{align*}
		\left( \abs{\xi}^{r-2} \xi - \abs{\eta}^{r-2} \eta \right)
		\cdot \left( \xi - \eta \right)   
		& = \left( \abs{\xi}^{r-1} - \abs{\eta}^{r-2} \eta_1 \right) \left( \abs{\xi} - \eta_1 \right)
		+ \abs{\eta}^{r-2} \eta_2^2  \\
		& \geq  (r-1) \left( \abs{\xi} + \abs{\eta} \right) ^{r-2} \left( [\abs{\xi} - \eta_1]^2 + \eta_2^2 \right)   \\
		& = (r-1) \left( \abs{\xi} + \abs{\eta} \right) ^{r-2} \abs{\xi - \eta}^2.
	\end{align*}
\end{proof}

On the other hand, the (S$_+$)-property is a compactness-type condition of differential operators in the weak formulation. It commonly appears in the context of existence of solutions of elliptic partial differential equations by playing a role in the proof of the Cerami condition, the Palais-Smale condition, or other similar compactness-type condition of functionals, among other reasons. This is key to finding critical points of these functionals, and it is achieved via results like the mountain pass theorem, see for example the formulation in \cite[Theorem 5.4.6]{Papageorgiou-Radulescu-Repovs-2019a}. Alternatively, the (S$_+$)-property also participates in proving that differential operators in the weak formulation are a homeomorphism. Below, one can find the definition of the (S$_+$)-property together with two other relevant properties. 

\begin{definition}
	Let $X$ be a reflexive Banach space, $X^*$ its dual space and denote by $\langle \cdot \,, \cdot\rangle$ its duality pairing. Let $A\colon X\to X^*$. Then, 
	\begin{enumerate}
		\item[\textnormal{(i)}] $A$ is called monotone if $\langle A(u) - A(v), u - v \rangle \geq 0$, and strictly monotone if the inequality is strict for $u \neq v$;
		\item[\textnormal{(ii)}] $A$ is said to satisfy the $(S_+)$-property if $u_n \weak u$ in $X$ and $\limsup_{n\to \infty}$ $\langle A(u_n),u_n-u\rangle \leq 0$ imply $u_n\to u$ in $X$;
		\item[\textnormal{(iii)}] $A$ is called coercive if there exists some function $g\colon[0,\infty) \to \R$ such 
		that $\lim_{t \to + \infty} g(t) = + \infty$ and
		\begin{align*}
			\frac{\langle A(u), u \rangle}{\|u\|_X} \geq g(\| u \|_X ) \text{ for all } u \in X.
		\end{align*}
	\end{enumerate}
\end{definition}

The connection between monotonicity formulas and the (S$_+$)-property is that, for some popular differential operators, the former is used to prove the latter. In this section, we are going to see how this was done almost exclusively for operators with a power-law growth. Later, in Section \ref{Beyond_the_power-law_case}, we are going to mention a couple of recent works which have studied alternative growths and prove the same type of results for a more general class of operators.

In this text, let us denote by $\Omega \subseteq \R^N$ a bounded domain with Lipschitz boundary. For $1 \leq r \leq \infty$, we denote by $\Lp{r}$ the standard Lebesgue space equipped with the norm $\|\cdot\|_r$, and by $\Wp{r}$ and $\Wpzero{r}$ we denote the typical Sobolev spaces fitted with the norm $\|\cdot\|_{1,r}$ and $\|\cdot\|_{1,r,0}$, respectively.

Linear operators do not even require monotonicity inequalities since the operator has exactly the shape that yields the norm of the space. For example, the Laplacian operator
\begin{align*}
	A_{\Delta} \colon \Wpzero{2} \to \left[ \Wpzero{2} \right] ^*, 
	\qquad \langle A_{\Delta} (u) , v \rangle = \into \nabla u \cdot \nabla v \dx,
\end{align*}
satisfies 
\begin{align*}
	\langle A_{\Delta} (u) - A_{\Delta} (v) , u - v \rangle = \into (\nabla u - \nabla v)^2 \dx = \norm{ \nabla u - \nabla v }_2^2.
\end{align*}
Furthermore, this can be generalized to a more general linear operator as long as it is uniformly elliptic: let $M \colon \Omega \to \R^{N \times N}$ be a matrix regular enough and such that $\xi \cdot (M(x)\xi) \geq c \abs{\xi}^2$ for all $x \in \Omega$ and some constant $c>0$ independent of $x$. Then the linear operator
\begin{align*}
	A_{\text{Lin}} \colon \Wpzero{2} \to \left[ \Wpzero{2} \right] ^*, 
	\qquad \langle A_{\text{Lin}} (u) , v \rangle = \into \nabla u \cdot ( M(x) \nabla v ) \dx,
\end{align*}
satisfies 
\begin{align*}
	\langle A_{\text{Lin}} (u) - A_{\text{Lin}} (v) , u - v \rangle = \into (\nabla u - \nabla v) \cdot [M(x) (\nabla u - \nabla v)] \dx \geq c \norm{ \nabla u - \nabla v }_2^2.
\end{align*}

The $p$-Laplacian operator is the simplest example in which monotonicity formulas play a role in the proof of the (S$_+$)-property. However, one still can find alternative arguments, for example using that it is d-monotone (its definition can be found in \cite{Galewski-2021}) and the uniform convexity of $\Wpzero{p}$ as they do in \cite[Proposition 2]{Aizicovici-Papageorgiou-Staicu-2009}. One can prove the next theorem using both approaches.

\begin{theorem}[{See \cite[Proposition 2]{Aizicovici-Papageorgiou-Staicu-2009}, among others.}]
	Let $1 < p < \infty$, the operator $A_{\Delta_p} \colon \Wpzero{p} \to \left[ \Wpzero{p} \right] ^*$ be defined as
	\begin{align*}
		\langle A_{\Delta_p} (u) , v \rangle = \into \abs{\nabla u}^{p-2} \nabla u \cdot \nabla v \dx,
	\end{align*}
	and the functional $I_{\Delta_p} \colon \Wpzero{p} \to \R$ be defined as
	\begin{align*}
		I_{\Delta_p} (u) = \into \frac{ \abs{\nabla u}^p }{p} \dx.
	\end{align*}
	Then, $I_{\Delta_p}$ is $C^1$, $I_{\Delta_p}'=A_{\Delta_p}$, $A_{\Delta_p}$ satisfies the (S$_+$)-property, and it is strictly monotone, bounded, coercive, and a homeomorphism. 
\end{theorem}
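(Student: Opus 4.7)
The plan has four main parts, addressing the listed properties roughly in order of difficulty.

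First, I would verify the analytic and algebraic properties: $C^1$-regularity of $I_{\Delta_p}$ with $I_{\Delta_p}' = A_{\Delta_p}$ follows from differentiation under the integral sign applied to the convex function $t \mapsto |t|^p/p$, using $\|\nabla u\|_p$ as a dominating quantity in $\Wpzero{p}$; the Gâteaux derivative is exactly $A_{\Delta_p}$, and continuity of $u \mapsto |\nabla u|^{p-2}\nabla u$ from $L^p(\Omega;\R^N)$ to $L^{p'}(\Omega;\R^N)$ upgrades this to $C^1$. Boundedness of $A_{\Delta_p}$ is Hölder's inequality. Coercivity follows from $\langle A_{\Delta_p}(u), u \rangle = \|\nabla u\|_p^p$ together with the Poincaré inequality, giving $\langle A_{\Delta_p}(u), u\rangle / \|u\|_{1,p,0} \geq c \|u\|_{1,p,0}^{p-1}$.

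Next, strict monotonicity is obtained directly from Lemma \ref{Le:MonotoneInequality} with $r = p$: integrating the pointwise inequalities with $\xi = \nabla u(x)$, $\eta = \nabla v(x)$ yields $\langle A_{\Delta_p}(u) - A_{\Delta_p}(v), u - v\rangle > 0$ whenever $\nabla u \neq \nabla v$ on a set of positive measure, hence whenever $u \neq v$ in $\Wpzero{p}$.

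The main step is the (S$_+$)-property. Assume $u_n \weak u$ in $\Wpzero{p}$ and $\limsup_{n\to\infty}\langle A_{\Delta_p}(u_n), u_n - u\rangle \leq 0$. Since $|\nabla u|^{p-2}\nabla u \in L^{p'}(\Omega;\R^N)$ and $\nabla u_n \weak \nabla u$ in $L^p$, the term $\langle A_{\Delta_p}(u), u_n - u\rangle$ tends to zero, so $\limsup \langle A_{\Delta_p}(u_n) - A_{\Delta_p}(u), u_n - u\rangle \leq 0$. For $p \geq 2$, Lemma \ref{Le:MonotoneInequality} gives this quantity $\geq C_p\|\nabla u_n - \nabla u\|_p^p$, so $\nabla u_n \to \nabla u$ in $L^p$ and Poincaré closes the argument. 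For $1 < p < 2$ the same lemma only controls a weighted $L^2$ quantity; the trick is to write
\begin{align*}
\int_\Omega |\nabla u_n - \nabla u|^p \dx = \int_\Omega \frac{|\nabla u_n - \nabla u|^p}{(|\nabla u_n| + |\nabla u|)^{p(2-p)/2}}(|\nabla u_n| + |\nabla u|)^{p(2-p)/2}\dx
\end{align*}
and apply Hölder's inequality with exponents $2/p$ and $2/(2-p)$. The first factor is controlled by $\langle A_{\Delta_p}(u_n) - A_{\Delta_p}(u), u_n - u\rangle^{p/2}$ via Lemma \ref{Le:MonotoneInequality} and tends to zero, while the second is uniformly bounded because weakly convergent sequences are bounded in norm. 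This is the delicate step where one must be careful about sets where $|\nabla u_n| + |\nabla u|$ vanishes, but on such sets the numerator also vanishes, so the quotient is interpreted as zero.

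Finally, the homeomorphism statement follows by standard surjectivity-plus-(S$_+$) reasoning. Since $A_{\Delta_p}$ is monotone, continuous, and coercive, the Browder--Minty theorem (see, e.g., \cite[Theorem 2.99]{Papageorgiou-Radulescu-Repovs-2019a}) yields surjectivity; strict monotonicity gives injectivity, so $A_{\Delta_p}^{-1}$ exists as a map from the dual into $\Wpzero{p}$. For continuity of the inverse, if $A_{\Delta_p}(u_n) \to A_{\Delta_p}(u)$, coercivity forces $(u_n)$ to be bounded, so up to subsequences $u_n \weak u^*$; testing with $u_n - u^*$ gives $\langle A_{\Delta_p}(u_n), u_n - u^*\rangle \to 0$, and the (S$_+$)-property yields $u_n \to u^*$, whence continuity of $A_{\Delta_p}$ and injectivity identify $u^* = u$, and a standard subsequence argument produces convergence of the full sequence. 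The main obstacle throughout is the (S$_+$)-property in the singular range $1 < p < 2$; everything else is either a standard functional-analytic packaging or a direct consequence of Lemma \ref{Le:MonotoneInequality}.
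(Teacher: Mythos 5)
Your proof is correct, but it does not follow the route the paper has in mind for this particular theorem. The paper states the result as known, citing Aizicovici--Papageorgiou--Staicu, and remarks just beforehand that in the constant-exponent case one can avoid monotonicity formulas altogether: the cited proof of the (S$_+$)-property runs through d-monotonicity and the uniform convexity of $\Wpzero{p}$ (from $u_n \weak u$ and the $\limsup$ condition one extracts $\|\nabla u_n\|_p \to \|\nabla u\|_p$, and the Kadec--Klee/uniform convexity property upgrades weak to strong convergence). You instead prove the (S$_+$)-property directly from the pointwise inequalities of Lemma \ref{Le:MonotoneInequality}: for $p \geq 2$ the integrated inequality gives $\|\nabla u_n - \nabla u\|_p^p$ immediately, and for $1<p<2$ you use the weighted H\"older trick with exponents $2/p$ and $2/(2-p)$, which is the classical Simon-type argument and is carried out correctly (including the observation that the quotient is interpreted as zero where $|\nabla u_n|+|\nabla u|$ vanishes, and that monotonicity turns the $\limsup\leq 0$ into a genuine limit equal to zero). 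The trade-off is exactly the one the paper is making a point of: the uniform-convexity route is shorter and avoids the case distinction in $p$, but it breaks down for variable exponents, whereas your monotonicity-formula route is the one that survives generalization and is the theme of Sections \ref{Power-law_case} and \ref{Beyond_the_power-law_case}. Your remaining steps ($C^1$-differentiability, boundedness, coercivity, strict monotonicity, and the homeomorphism property via Minty--Browder plus (S$_+$) for continuity of the inverse) match the standard packaging the paper alludes to; only your theorem number for the Browder--Minty citation should be checked, as the paper itself refers to \cite[Theorem 26.A]{Zeidler-1990}.
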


Furthermore, the arguments used above for the $p$-Laplacian can be adapted to handle the case of the double phase operator with constant exponents, yielding the following theorem. The main difference is the need to use a Musielak-Orlicz space $\WHzero$ given by the function $\h(x,t) = t^p + \mu(x) t^q$. These spaces will be introduced with more detail in Section \ref{Musielak-Orlicz_spaces}.

\begin{theorem}[{See \cite[Proposition 3.1]{Liu-Dai-2018} and \cite[Proposition 2.1 and 2.2]{Perera-Squassina-2018}.}]
	Let $1 < p < q < p^*$, $0 \leq \mu \in \Lp{\infty}$, $\h(x,t) = t^p + \mu(x) t^q$, the operator $A_{\h} \colon \WHzero \to \left[ \WHzero \right] ^*$ be defined as
	\begin{align*}
		\langle A_{\h} (u) , v \rangle = \into \abs{\nabla u}^{p-2} \nabla u \cdot \nabla v + \mu(x) \abs{\nabla u}^{q-2} \nabla u \cdot \nabla v \dx,
	\end{align*}
	and the functional $I_{\h} \colon \WHzero \to \R$ be defined as
	\begin{align*}
		I_{\h} (u) = \into \frac{ \abs{\nabla u}^p }{p} + \mu(x) \frac{ \abs{\nabla u}^q }{q} \dx.
	\end{align*}
	Then, $I_{\h}$ is $C^1$, $I_{\h}'=A_{\h}$, $A_{\h}$ satisfies the (S$_+$)-property, and it is strictly monotone, bounded, coercive, and a homeomorphism. 
\end{theorem}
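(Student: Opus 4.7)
The claims apart from the (S$_+$)-property can be handled by fairly standard arguments; Lemma \ref{Le:MonotoneInequality} does the real work in the (S$_+$) step. I would first establish $I_\h \in C^1$ with $I_\h' = A_\h$ via the dominated convergence theorem on difference quotients of $t \mapsto t^p/p + \mu(x) t^q/q$, controlling the integrands by $(\abs{\nabla u}+\abs{\nabla v})^{p-1}+\mu(x)(\abs{\nabla u}+\abs{\nabla v})^{q-1}$ and checking the resulting duality pairing through H\"older's inequality in the Musielak--Orlicz setting. The same H\"older inequality gives boundedness of $A_\h$, and coercivity follows from the comparison $\varrho_{\h}(\nabla u) \geq \norm{\nabla u}_\h^p - 1$ (valid once $\norm{\nabla u}_\h$ is large enough) together with the identity $\langle A_\h(u),u\rangle = \varrho_{\h}(\nabla u)$ and the norm equivalence $\norm{u}_{1,\h,0} \sim \norm{\nabla u}_\h$. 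For strict monotonicity I would split $\langle A_\h(u)-A_\h(v), u-v\rangle$ into its $p$-Laplacian part and its $\mu$-weighted $q$-Laplacian part and apply Lemma \ref{Le:MonotoneInequality} pointwise with $r=p$ and $r=q$; using $\mu \geq 0$ (built into the definition of $\h$), each integrand is nonnegative and strictly positive wherever $\nabla u \neq \nabla v$, which together with Poincar\'e's inequality yields strict monotonicity.

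The core step is the (S$_+$)-property. Given $u_n \weak u$ in $\WHzero$ with $\limsup_n \langle A_\h(u_n), u_n - u\rangle \leq 0$, the weak convergence and $A_\h(u) \in [\WHzero]^*$ give $\langle A_\h(u), u_n - u\rangle \to 0$, and monotonicity then forces $\langle A_\h(u_n) - A_\h(u), u_n - u\rangle \to 0$. Lemma \ref{Le:MonotoneInequality} ensures that both the $p$-part and the $\mu$-weighted $q$-part of the integrand are nonnegative, so both must tend to zero in $L^1(\Omega)$. To promote this to modular convergence I would argue termwise: in the superquadratic regime $r \geq 2$ the first inequality of Lemma \ref{Le:MonotoneInequality} gives immediately $\into \abs{\nabla u_n - \nabla u}^r \dx \to 0$; in the subquadratic regime $1 < r < 2$ the second inequality, together with H\"older's inequality with conjugate exponents $2/r$ and $2/(2-r)$ and the $L^r$-boundedness of $\{\nabla u_n\}$ (inherited from the embedding $\WHzero \hookrightarrow \Wpzero{p}$), yields the same conclusion. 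Applying this with $r=p$ and then with $r=q$ against the weight $\mu$ gives $\varrho_{\h}(\nabla u_n - \nabla u) \to 0$, which by the standard equivalence of modular and norm convergence in Musielak--Orlicz spaces implies $u_n \to u$ in $\WHzero$.

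Finally, the homeomorphism claim follows from the Browder--Minty theorem: bounded, continuous, strictly monotone and coercive imply that $A_\h$ is a bijection onto $[\WHzero]^*$, while the (S$_+$)-property upgrades this to continuity of the inverse. The main obstacle I foresee is the subquadratic case $1 < p < 2$ (or $1 < q < 2$) in the (S$_+$) step: converting the pointwise inequality of Lemma \ref{Le:MonotoneInequality} into actual control of $\abs{\nabla u_n - \nabla u}^r$ there is precisely what forces the H\"older splitting and the use of the uniform $L^p$ bound from the Sobolev embedding.
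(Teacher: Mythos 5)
Your proof is essentially correct, but it follows a genuinely different route from the one the paper points to for this statement: the paper offers no proof of its own here, referring instead to \cite{Liu-Dai-2018} and \cite{Perera-Squassina-2018} and remarking that for \emph{constant} exponents the $p$-Laplacian arguments (d-monotonicity combined with the uniform convexity of the space) adapt directly, so that no pointwise monotonicity formula is needed. You instead derive the (S$_+$)-property from Lemma \ref{Le:MonotoneInequality} applied termwise to the $p$-part and the $\mu$-weighted $q$-part, with the H\"older splitting with exponents $2/r$ and $2/(2-r)$ in the subquadratic regime. That is exactly the strategy the paper reserves for the variable-exponent results and generalizes in Section \ref{Beyond_the_power-law_case}; it is more robust (it survives where the uniform-convexity argument breaks down, as the paper explains), at the cost of the case distinction $r\geq 2$ versus $1<r<2$, whereas the cited uniform-convexity route is shorter in the constant-exponent setting. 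Two points to tighten in your version: in the subquadratic estimate for the $q$-part you need boundedness of $\into \mu(x)\left(\abs{\nabla u_n}+\abs{\nabla u}\right)^{q}\dx$, which does \emph{not} follow from the embedding $\WHzero \hookrightarrow \Wpzero{p}$ (that only controls the unweighted $L^p$ piece); it follows from the boundedness of the weakly convergent sequence $(u_n)$ in $\WHzero$ together with the norm--modular comparison of Proposition \ref{Prop:AbstractoneNormModular}. Likewise, the final passage from $\varrho_{\h}(\nabla u_n-\nabla u)\to 0$ to norm convergence should be justified by the (aDec)/$\Delta_2$ property of $\h$ (valid since $q<\infty$), since modular-to-norm convergence is not automatic for general $\Phi$-functions.
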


However, once we consider variable exponents, the strategy of using the d-monotonicity and the uniform convexity breaks down. One can see why in the proof of \cite[Proposition 2.2]{Perera-Squassina-2018}, since when one tries to use a H\"older inequality for more general spaces like $\Lp{p(\cdot)}$, a new sharp constant $2$ appears and destroys the argument. One could also try to use the modular of the space $\varrho_{p(\cdot)}$ instead of the norm, but no appropriate H\"older inequality is available in this shape. Alternatively, in the proof of \cite[Proposition 2]{Aizicovici-Papageorgiou-Staicu-2009}, it is referred to \cite[Proposition 4.4.1]{Gasinski-Papageorgiou-2005}, where assumption H(a)$_1$ (iv) fails. Hence, we need the monotonicity formulas to prove the (S$_+$)-property in this case. The following two theorems have been proven with this approach. In this case, let $\h(x,t) = t^{p(x)} + \mu(x) t^{q(x)}$ and consider its associated Musielak-Orlicz space $\WHzero$.

\begin{theorem}[{See \cite[Theorem 3.1]{Fan-Zhang-2003}.}]
	Let $p \in C(\close)$ with $1 < p(x) < \infty$ for all $x \in \close$, the operator $A_{\Delta_{p(\cdot)}} \colon \Wpzero{p(\cdot)} \to \left[ \Wpzero{p(\cdot)} \right] ^*$ be defined as
	\begin{align*}
		\langle A_{\Delta_{p(\cdot)}} (u) , v \rangle = \into \abs{\nabla u}^{p(x)-2} \nabla u \cdot \nabla v \dx,
	\end{align*}
	and the functional $I_{\Delta_{p(\cdot)}} \colon \Wpzero{p(\cdot)} \to \R$ be defined as
	\begin{align*}
		I_{\Delta_{p(\cdot)}} (u) = \into \frac{ \abs{\nabla u}^{p(x)} }{p(x)} \dx.
	\end{align*}
	Then, $I_{\Delta_{p(\cdot)}}$ is $C^1$, $I_{\Delta_{p(\cdot)}}'=A_{\Delta_{p(\cdot)}}$, $A_{\Delta_{p(\cdot)}}$ satisfies the (S$_+$)-property, and it is strictly monotone, bounded, coercive, and a homeomorphism. 
\end{theorem}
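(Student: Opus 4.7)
The plan is to establish the five assertions in order, with Lemma \ref{Le:MonotoneInequality} applied pointwise at $r = p(x)$ supplying the monotonicity ingredient that is no longer available through d-monotonicity.

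For $C^1$-regularity and the identification $I'_{\Delta_{p(\cdot)}} = A_{\Delta_{p(\cdot)}}$, I would differentiate under the integral using density of smooth functions in $\Wpzero{p(\cdot)}$ together with the continuity of the Nemytskij operator $\xi \mapsto \abs{\xi}^{p(x)-2}\xi$ from $L^{p(\cdot)}(\Omega;\R^N)$ into $L^{p'(\cdot)}(\Omega;\R^N)$. Boundedness of $A_{\Delta_{p(\cdot)}}$ follows from the generalized H\"older inequality, and coercivity from the norm-modular relation: once $\norm{\nabla u}_{p(\cdot)}>1$, the modular bounds $\langle A_{\Delta_{p(\cdot)}}(u), u\rangle$ below by a power of $\norm{\nabla u}_{p(\cdot)}$ greater than $1$, which diverges. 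For strict monotonicity, I would split $\Omega = \Omega_+ \cup \Omega_-$ with $\Omega_+ = \{x : p(x) \geq 2\}$ and $\Omega_- = \Omega \setminus \Omega_+$, and apply the two pointwise bounds of Lemma \ref{Le:MonotoneInequality} with $r = p(x)$ on each piece to get
\begin{align*}
\langle A_{\Delta_{p(\cdot)}}(u) - A_{\Delta_{p(\cdot)}}(v), u-v\rangle \geq \int_{\Omega_+} C_{p(x)} \abs{\nabla u - \nabla v}^{p(x)} \dx + \int_{\Omega_-} C_{p(x)} \frac{\abs{\nabla u - \nabla v}^2}{(\abs{\nabla u}+\abs{\nabla v})^{2-p(x)}} \dx,
\end{align*}
which vanishes only when $\nabla u = \nabla v$ a.e., and Poincar\'e then forces $u = v$.

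The main obstacle is the (S$_+$)-property. Given $u_n \weak u$ in $\Wpzero{p(\cdot)}$ with $\limsup_n \langle A_{\Delta_{p(\cdot)}}(u_n), u_n - u\rangle \leq 0$, weak convergence tested against the fixed dual element $A_{\Delta_{p(\cdot)}}(u)$ reduces matters to $\langle A_{\Delta_{p(\cdot)}}(u_n) - A_{\Delta_{p(\cdot)}}(u), u_n - u\rangle \to 0$, whence each of the two non-negative integrals above tends to zero. On $\Omega_+$ this directly controls $\int_{\Omega_+} \abs{\nabla u_n - \nabla u}^{p(x)} \dx$. On $\Omega_-$ the available quantity is only $(\abs{\nabla u_n}+\abs{\nabla u})^{p(x)-2}\abs{\nabla u_n-\nabla u}^2$, and one must reconstruct $\abs{\nabla u_n - \nabla u}^{p(x)}$ via
\begin{align*}
\abs{\nabla u_n - \nabla u}^{p(x)} = \l[ (\abs{\nabla u_n}+\abs{\nabla u})^{p(x)-2}\abs{\nabla u_n - \nabla u}^2 \r]^{p(x)/2} (\abs{\nabla u_n}+\abs{\nabla u})^{p(x)(2-p(x))/2},
\end{align*}
and then estimating by the variable exponent H\"older inequality, using the a priori $L^{p(\cdot)}$-bound on $\nabla u_n$ to absorb the second factor. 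Once the full modular $\int_\Omega \abs{\nabla u_n - \nabla u}^{p(x)} \dx$ tends to zero, the norm-modular equivalence upgrades this to strong convergence in $\Wpzero{p(\cdot)}$.

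The homeomorphism assertion is then routine: strict monotonicity gives injectivity, continuity combined with monotonicity and coercivity gives surjectivity via the Browder-Minty theorem, and continuity of the inverse follows from (S$_+$) by the standard argument that if $f_n \to f$ in the dual, then $u_n := A_{\Delta_{p(\cdot)}}^{-1}(f_n)$ is bounded, a subsequence converges weakly to some $u$, and $\langle A_{\Delta_{p(\cdot)}}(u_n), u_n - u\rangle = \langle f_n, u_n - u\rangle \to 0$ triggers the (S$_+$)-property, forcing strong convergence with $A_{\Delta_{p(\cdot)}}(u) = f$.
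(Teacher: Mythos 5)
Your proposal is correct, and it is essentially the classical direct argument for the $p(x)$-Laplacian (the one in Fan--Zhang, which this paper only cites rather than reproves): apply Lemma \ref{Le:MonotoneInequality} pointwise with $r=p(x)$, split $\Omega$ into $\{p\geq 2\}$ and $\{p<2\}$, and on the second set reconstruct $\abs{\nabla u_n-\nabla u}^{p(x)}$ via the variable-exponent H\"older inequality. Two details are worth making explicit: the constants $C_{p(x)}$ are uniformly positive only because $p\in C(\close)$ on a compact set gives $1<p_-\leq p_+<\infty$, and on $\Omega_-$ the conjugate exponent $2/(2-p(\cdot))$ may be unbounded, which is harmless but means the boundedness of the second H\"older factor should be argued through its modular $\int_{\Omega_-}(\abs{\nabla u_n}+\abs{\nabla u})^{p(x)}\dx$ rather than through a norm--modular power estimate. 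The paper itself reaches this statement by a genuinely different route: it is the special case $\ph(x,t)=t^{p(x)}$ of the general Musielak--Orlicz results (example (a) at the end of Section \ref{Beyond_the_power-law_case}: \textnormal{(Mono1)}, \textnormal{(Mono2)} and \textnormal{(Poin)} hold), chiefly Proposition \ref{Prop:MonoAndSplus}, where your pointwise Simon inequalities are replaced by the Diening--Ettwein formula of Lemma \ref{Le:GeneralizedMono} (with an $x$-dependent, possibly non-measurable constant used only pointwise, never in an integrand), and the passage from a.e.\ convergence of gradients to modular convergence in $\Wpzero{p(\cdot)}$ is done through the generalized Young inequality (Lemma \ref{Le:YoungIneqph}), the converse of Vitali's theorem and Vitali's theorem, instead of your H\"older splitting. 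Your approach buys an elementary, self-contained proof with explicit constants tailored to the power law; the paper's buys uniformity over general growths (logarithmic perturbations and general $\Phi$-functions), at the cost of the abstract machinery of Section \ref{Musielak-Orlicz_spaces}.
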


\begin{theorem}[{See \cite[Proposition 3.1 and Lemma 3.3]{Crespo-Blanco-Gasinski-Harjulehto-Winkert-2022}.}]
	Let $p,q \in C(\close)$ with $1 < p(x) < q(x) < p^*(x)$ for all $x \in \close$, $0 \leq \mu \in \Lp{\infty}$, $\h(x,t) = t^{p(x)} + \mu(x) t^{q(x)}$, the operator $A_{\h} \colon \WHzero \to \left[ \WHzero \right] ^*$ be defined as
	\begin{align*}
		\langle A_{\h} (u) , v \rangle = \into \abs{\nabla u}^{p(x)-2} \nabla u \cdot \nabla v + \mu(x) \abs{\nabla u}^{q(x)-2} \nabla u \cdot \nabla v \dx,
	\end{align*}
	and the functional $I_{\h} \colon \WHzero \to \R$ be defined as
	\begin{align*}
		I_{\h} (u) = \into \frac{ \abs{\nabla u}^{p(x)} }{p(x)} + \mu(x) \frac{ \abs{\nabla u}^{q(x)} }{q(x)} \dx.
	\end{align*}
	Then, $I_{\h}$ is $C^1$, $I_{\h}'=A_{\h}$, $A_{\h}$ satisfies the (S$_+$)-property, and it is strictly monotone, bounded, coercive, and a homeomorphism. 
\end{theorem}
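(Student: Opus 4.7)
The plan is to mirror the standard proof for the variable-exponent $p$-Laplacian operator $A_{\Delta_{p(\cdot)}}$ from the previous theorem, replacing the single-term monotonicity estimate with the two-term version that comes from applying Lemma \ref{Le:MonotoneInequality} separately to $|\nabla u|^{p(x)-2}\nabla u$ and $\mu(x)|\nabla u|^{q(x)-2}\nabla u$. First, $I_\h \in C^1(\WHzero)$ and $I_\h' = A_\h$ follow by differentiating under the integral sign: the integrand of $I_\h$ is $C^1$ in $\nabla u$ with derivative bounded pointwise by $|\nabla u|^{p(x)-1} + \mu(x)|\nabla u|^{q(x)-1}$, which lies in the dual Musielak-Orlicz space by the convex-conjugate growth of $\h$ and $\mu \in \Linf$. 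Boundedness of $A_\h$ is an immediate consequence of the Musielak-Orlicz H\"older inequality applied to the same growth bound, and coercivity follows from the standard norm-modular relation for $\LH$ together with the Poincar\'e inequality in $\WHzero$.

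Strict monotonicity is the first place where Lemma \ref{Le:MonotoneInequality} enters: applied pointwise with $r = p(x)$ (and separately with $r = q(x)$), both summands of the integrand of $\langle A_\h(u) - A_\h(v), u - v\rangle$ are nonnegative, and the $p(x)$-summand is strictly positive wherever $\nabla u(x) \neq \nabla v(x)$. Integrating yields monotonicity, and $\langle A_\h(u) - A_\h(v), u - v\rangle = 0$ forces $\nabla u = \nabla v$ almost everywhere, hence $u = v$ in $\WHzero$ by the Poincar\'e inequality.

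The main obstacle is the (S$_+$)-property, since the d-monotonicity argument available in the constant-exponent case fails here. Given $u_n \weak u$ in $\WHzero$ with $\limsup_n \langle A_\h(u_n), u_n - u\rangle \leq 0$, monotonicity together with the weak convergence $\langle A_\h(u), u_n - u\rangle \to 0$ upgrades the hypothesis to $\lim_n \langle A_\h(u_n) - A_\h(u), u_n - u\rangle = 0$. The integrand of this duality pairing is pointwise nonnegative by Lemma \ref{Le:MonotoneInequality}, so it converges to zero in $L^1(\Omega)$ and, passing to a subsequence, almost everywhere on $\Omega$. At each such point, inverting Lemma \ref{Le:MonotoneInequality} --- splitting $\Omega$ according to $p(x) \geq 2$ or $1 < p(x) < 2$ and borrowing the $q(x)$-summand on the set $\{\mu > 0,\ q(x) \geq 2\}$ to tame the factor $(|\nabla u_n| + |\nabla u|)^{2-p(x)}$ --- yields $\nabla u_n(x) \to \nabla u(x)$ almost everywhere. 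The most delicate step is then upgrading this a.e. convergence to convergence in $\WHzero$; I would accomplish this via Vitali's convergence theorem, with the uniform integrability supplied by modular convergence, which in turn follows from $\lim_n \langle A_\h(u_n) - A_\h(u), u_n - u\rangle = 0$ together with the weak convergence $u_n \weak u$.

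Once continuity, strict monotonicity, boundedness and coercivity of $A_\h$ have been established, the Browder-Minty theorem gives that $A_\h$ is a bijection onto $[\WHzero]^*$, and the (S$_+$)-property ensures continuity of $A_\h^{-1}$, completing the proof that $A_\h$ is a homeomorphism.
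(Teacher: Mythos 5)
Your overall route is the one the paper points to (and generalizes in Section \ref{Beyond_the_power-law_case}): pointwise monotonicity formulas give strict monotonicity and, after upgrading the hypothesis to $\lim_n\langle A_{\h}(u_n)-A_{\h}(u),u_n-u\rangle=0$, a.e.\ convergence of gradients, followed by a Vitali argument. However, your mechanism for the a.e.\ step has a concrete hole. On the set where $1<p(x)<2$ you propose to ``borrow the $q(x)$-summand on $\{\mu>0,\ q(x)\geq 2\}$'', but this says nothing on the set where $p(x)<2$ and $\mu(x)=0$ (or $q(x)<2$) --- and $\{\mu=0\}$ is exactly the typical double-phase region. There the only lower bound from Lemma \ref{Le:MonotoneInequality} is $C\,(\abs{\nabla u_n}+\abs{\nabla u})^{p(x)-2}\abs{\nabla u_n-\nabla u}^2$, whose prefactor degenerates if $\abs{\nabla u_n(x)}\to\infty$, so the convergence of the product to zero does not by itself give $\nabla u_n(x)\to\nabla u(x)$. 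You must first exclude blow-up, e.g.\ via $(\abs{\xi}^{r-2}\xi-\abs{\eta}^{r-2}\eta)\cdot(\xi-\eta)\geq(\abs{\xi}^{r-1}-\abs{\eta}^{r-1})(\abs{\xi}-\abs{\eta})$, whose right-hand side tends to $\infty$ as $\abs{\xi}\to\infty$ with $\eta$ fixed; boundedness of $\abs{\nabla u_n(x)}$ plus continuity and strict monotonicity of $\xi\mapsto\abs{\xi}^{r-2}\xi$ then yield the a.e.\ limit. This is precisely the difficulty that the second alternative of assumption \textnormal{(Mono2)} is designed to neutralize in the paper's general proof of Proposition \ref{Prop:MonoAndSplus}.

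The second thin spot is the final step: asserting that ``modular convergence \ldots follows from the pairing limit together with weak convergence'' is stating the crux, not proving it. The actual mechanism (both here and in the general setting, via Lemma \ref{Le:YoungIneqph}) is Young's inequality, which for this $\h$ reads $\abs{\nabla u_n}^{p(x)-1}\abs{\nabla u}\leq\tfrac{p(x)-1}{p(x)}\abs{\nabla u_n}^{p(x)}+\tfrac{1}{p(x)}\abs{\nabla u}^{p(x)}$ (and its $q$-analogue), giving $\langle A_{\h}(u_n),u_n-u\rangle\geq I_{\h}(u_n)-I_{\h}(u)$, hence $\limsup_n I_{\h}(u_n)\leq I_{\h}(u)$; Fatou/weak lower semicontinuity gives the reverse inequality, so $I_{\h}(u_n)\to I_{\h}(u)$. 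Only then does the converse of Vitali's theorem give uniform integrability of $\h(\cdot,\abs{\nabla u_n})$, hence of $\h(\cdot,\abs{\nabla u_n-\nabla u})$ by the $\Delta_2$-estimate, and Vitali yields $\varrho_{\h}(\nabla u_n-\nabla u)\to 0$, equivalently norm convergence in $\WHzero$. With these two repairs your outline coincides with the paper's argument; the remaining items (differentiability, boundedness, coercivity, Minty--Browder plus (S$_+$) for the homeomorphism) are as in the paper.
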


One final note is that these kind of results can also be proven for operators involving the values of the solution and not only the gradients. If the values of the solution play a similar role as the gradients, the proof can be made mostly analogously. They can even have different exponents or vanishing weights as long as some compatibility conditions are met. One example of such results can be found in \cite[Proposition 3.3]{Amoroso-Crespo-Blanco-Pucci-Winkert-2023}. If the values of the solution play a different role, for example they are multiplied by the gradient, then the situation can be more complicated. For a statement for a general Leray-Lions operator, check \cite[Theorem 2.109]{Carl-Le-Montreanu-2007}.

As we have seen, all these operators have power-law growth, the (S$_+$)-property can be proven using the usual monotonicity formula and, in the case of variable exponents, the alternative argument fails. In light of this observation, one could think whether a similar strategy can be applied to a more general class of operators, which may not have power-law growth. Of course, this would require substituting the usual monotonicity formulas with another adequate tool. In Section \ref{Beyond_the_power-law_case}, such results are proven and the necessary tools are presented. As we will deal with general growths that may not be power-laws, we need to first introduce Musielak-Orlicz spaces in Section \ref{Musielak-Orlicz_spaces}.

%********************************************************************
\section{Musielak-Orlicz spaces}\label{Musielak-Orlicz_spaces}
%********************************************************************

In Section \ref{Beyond_the_power-law_case} we are going to consider operators which do not have power-law growth. The most immediate consequence is that the usual Lebesgue and Sobolev spaces are not adequate for the study of such operators, and hence we will use Musielak-Orlicz spaces. Let us introduce them first. For this purpose, we follow the definitions and results from the book of Harjulehto-H\"ast\"o \cite{Harjulehto-Hasto-2019}. For the rest of this section let us denote by $(A,\Sigma,\mu)$ a $\sigma$-finite, complete measure space with $\mu \not \equiv 0$, while $\Omega$ still denotes a bounded domain in $\R^N$ with $N \geq 2$ and Lipschitz boundary $\partial \Omega$.

\begin{definition}
	Let  $\ph \colon A \times (0,+\infty) \to \R$. We say that
	\begin{enumerate}
		\item[\textnormal{(i)}]
		$\ph$ is almost increasing in the second variable if there exists $a \geq 1$ such that $\ph(x,s) \leq a \ph(s,t)$ for all $0 < s < t$ and for a.a.\,$x \in A$;
		\item[\textnormal{(ii)}]
		$\ph$ is almost decreasing in the second variable if there exists $a \geq 1$ such that $a \ph(x,s) \geq \ph(x,t)$ for all $0 < s < t$ and for a.a.\,$x \in A$.
	\end{enumerate}
	
	Let $\ph \colon A \times (0,+\infty) \to \R$ and $p,q>0$. We say that $\ph$ satisfies the property
	\begin{enumerate}[leftmargin=2cm]
		\item[\textnormal{(Inc)}$_p$]
		if $t^{-p}\ph(x,t)$ is increasing in the second variable;
		\item[\textnormal{(aInc)}$_p$]
		if $t^{-p}\ph(x,t)$ is almost increasing in the second variable;
		\item[\textnormal{(Dec)}$_q$]
		if $t^{-q}\ph(x,t)$ is decreasing in the second variable;
		\item[\textnormal{(aDec)}$_q$]
		if $t^{-q}\ph(x,t)$ is almost decreasing in the second variable.
	\end{enumerate}
	
	Without subindex, that is \textnormal{(Inc)}, \textnormal{(aInc)}, \textnormal{(Dec)}, and \textnormal{(aDec)}, it indicates that there exists some $p>1$ or $q<\infty$ such that the condition holds.
\end{definition}

\begin{definition}
	A function $\ph \colon A \times [0,+\infty) \to [0,+\infty]$ is said to be a generalized $\Phi$-function if $\ph$ is measurable in the first variable, increasing in the second variable, and satisfies $\ph(x,0)=0$, $\lim_{t\to 0^+} \ph(x,t) = 0$, and $\lim_{t \to +\infty} \ph(x,t) = +\infty$ for a.a.\,$x \in A$. Moreover, we say that
	\begin{enumerate}
		\item[\textnormal{(i)}]
		$\ph$ is a generalized weak $\Phi$-function if it satisfies \textnormal{(aInc)}$_1$ on $A \times (0,+\infty)$;
		\item[\textnormal{(ii)}]
		$\ph$ is a generalized convex $\Phi$-function if $\ph(x,\cdot)$ is left-continuous and convex for a.a.\,$x \in A$;
		\item[\textnormal{(iii)}]
		$\ph$ is a generalized strong $\Phi$-function if $\ph(x,\cdot)$ is continuous in the topology of $[0,\infty]$ and convex for a.a.\,$x \in A$.
	\end{enumerate}
	
	If $\ph$ does not depend on $x$, i.e. $\ph(x,t)=\ph(x_0,t)$ for $x_0 \in \Omega$ and all $x \in \Omega$, then we say it is a $\Phi$-function.
\end{definition}

\begin{definition}
	Let $\ph \colon A \times [0,+\infty) \to [0,+\infty]$. We denote by $\ph^*$ the conjugate function of $\ph$, which is defined for $x \in A$ and $s \geq 0$ by
	\begin{align*}
		\ph^*(x,s) = \sup_{t \geq 0} (ts - \ph(x,t)).
	\end{align*}
	
	Furthermore, we say that 
	\begin{enumerate}
		\item[\textnormal{(i)}]
		$\ph$ is doubling (or satisfies the $\Delta_2$-condition) if there exists a constant $K \geq 2$ such that
		\begin{align*}
			\ph(x,2t) \leq K \ph(x,t)
		\end{align*}
		for all $t \in (0,+\infty]$ and for a.a.\,$x \in A$;
		\item[\textnormal{(ii)}]
		$\ph$ satisfies the $\nabla_2$-condition if $\ph^*$ satisfies the $\Delta_2$ condition.
	\end{enumerate}
\end{definition}

\begin{lemma}[{\cite[Lemma 2.2.6 and Corollary 2.4.11]{Harjulehto-Hasto-2019}}]
	\label{Le:equivalences}
	Let $\ph \colon A \times [0,+\infty) \to [0,+\infty]$ be a generalized weak $\Phi$-function. Then,
	\begin{enumerate}
		\item[\textnormal{(i)}]
		it satisfies the $\Delta_2$-condition if and only if it satisfies \textnormal{(aDec)};
		\item[\textnormal{(ii)}]
		if it is a generalized convex $\Phi$-function, it satisfies the $\Delta_2$ condition if and only if it satisfies \textnormal{(Dec)};
		\item[\textnormal{(iii)}]
		it satisfies the $\nabla_2$-condition if and only if it satisfies \textnormal{(aInc)}.
	\end{enumerate}
\end{lemma}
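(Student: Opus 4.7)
The plan is to prove each equivalence separately. In all three cases the forward direction (from almost-monotonicity of $t^{-r}\varphi(x,t)$ to a doubling condition on $\varphi$ or $\varphi^*$) is essentially immediate, while the converse requires either an iteration argument or a duality argument.

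For part (i), the implication (aDec)$_q \Rightarrow \Delta_2$ follows directly from the definition: writing the almost-decreasing condition with $s=t$ and $2t$ gives $\varphi(x,2t) \leq a\, 2^q \varphi(x,t)$. For the converse, I would iterate the doubling inequality $n$ times to obtain $\varphi(x, 2^n t) \leq K^n \varphi(x, t)$, and for arbitrary $0 < s < t$ pick $n \in \mathbb{N}$ with $2^{n-1} \leq t/s < 2^n$. Choosing any $q$ with $2^q \geq K$, one has $K^n \leq 2^{nq} \leq 2^q (t/s)^q$, which rearranges to the (aDec)$_q$ inequality with constant $2^q$; note that the standing hypothesis (aInc)$_1$ built into the definition of a generalized weak $\Phi$-function only serves to guarantee that $\varphi(x,t) > 0$ for $t>0$ so this manipulation is meaningful. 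For part (ii), the forward direction is identical; for the converse, convexity of $\varphi(x,\cdot)$ upgrades the almost-monotonicity of $t \mapsto t^{-q}\varphi(x,t)$ provided by (i) into genuine monotonicity (after, if necessary, slightly enlarging $q$), since a convex function cannot oscillate against a uniform power-law trend without contradicting the chord-above-graph property at some point.

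The main obstacle is part (iii), where the growth information has to be transported through the Legendre conjugate. The strategy is to show that (aInc)$_p$ for $\varphi$ (with $p>1$) is equivalent to (aDec)$_{p'}$ for $\varphi^*$, with $p' = p/(p-1)$; once this is established, applying part (i) to $\varphi^*$ translates $\Delta_2$ for $\varphi^*$ (i.e. $\nabla_2$ for $\varphi$) into (aInc) for $\varphi$. For the forward direction, starting from (aInc)$_p$ rewritten in the form $\varphi(x,t) \geq a^{-1}(t/s_0)^p \varphi(x,s_0)$ for $t \geq s_0$, I would plug this lower bound into the supremum defining $\varphi^*(x,s) = \sup_{t \geq 0}(ts - \varphi(x,t))$ and perform the resulting one-variable maximization to obtain a bound of the form $\varphi^*(x,s) \leq C\, s^{p'}$, which delivers (aDec)$_{p'}$. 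The converse is symmetric, using the biconjugate identity $\varphi^{**} = \varphi$ available for (left-continuous convex) generalized $\Phi$-functions. The delicate points are the behaviour near $t=0$ and $t=+\infty$ (where $\varphi$ may vanish on an interval or take the value $+\infty$), and the justification of $\varphi^{**}=\varphi$ in the required generality; this technical bookkeeping is exactly what the Harjulehto--H\"ast\"o monograph carries out in detail.
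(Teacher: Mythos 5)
There is no in-paper proof to compare against: the paper quotes this lemma verbatim from Harjulehto--H\"ast\"o, so your attempt should be measured against the monograph's argument, whose architecture (part (i) by iterating the doubling inequality, part (iii) by a conjugation duality combined with part (i)) you do reproduce. Part (i) is correct as sketched, although the parenthetical is off: (aInc)$_1$ does \emph{not} force $\varphi(x,t)>0$ for $t>0$ (a weak $\Phi$-function may vanish on an interval, e.g.\ $\varphi(t)=\max\{t-1,0\}^2$); no positivity is needed for your rearrangement anyway, and $\Delta_2$ itself rules out the degenerate case. Part (ii)'s converse, however, is asserted rather than proved: ``a convex function cannot oscillate against a power-law trend'' is not an argument. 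The standard mechanism is that $\Delta_2$ together with the limits at $0$ and $\infty$ makes $\varphi(x,\cdot)$ finite and positive on $(0,\infty)$, convexity gives $t\varphi'(x,t)\leq\varphi(x,2t)-\varphi(x,t)\leq(K-1)\varphi(x,t)$ for the right derivative, and integrating $\varphi'/\varphi\leq(K-1)/t$ yields that $t^{-(K-1)}\varphi(x,t)$ is decreasing, i.e.\ (Dec)$_{K-1}$ --- note the exponent jumps to $K-1$, not a ``slight'' enlargement of the $q$ from (aDec).

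The genuine gap is in (iii). From $\varphi(x,t)\geq a^{-1}(t/s_0)^p\varphi(x,s_0)$ you can indeed extract, by one-variable maximization, a bound of the form $\varphi^*(x,s)\leq A s+B s^{p'}$, but such a pointwise upper bound does not ``deliver (aDec)$_{p'}$'': (aDec)$_{p'}$ is the two-point comparison $v^{-p'}\varphi^*(x,v)\leq a'\,u^{-p'}\varphi^*(x,u)$ for $u<v$, and to reach it from your bound you would also need the matching lower bound $\varphi^*(x,u)\gtrsim u^{p'}$, which fails in general: for $\varphi(x,t)=t^p+t^{2p}$, which satisfies (aInc)$_p$, one has $\varphi^*(x,u)\sim u^{(2p)'}\ll u^{p'}$ as $u\to\infty$, so that deduction path is blocked even though the conclusion is true. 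The correct (and shorter) argument keeps both points in play by using the scaling form of (aInc)$_p$, namely $\varphi(x,\lambda t)\leq a\lambda^p\varphi(x,t)$ for $\lambda\in(0,1]$, inside the supremum: substituting $t=\lambda\tau$,
\[
\varphi^*\bigl(x,\lambda^{p-1}s\bigr)\;\geq\;\sup_{\tau\geq0}\bigl(\lambda^{p}s\tau-a\lambda^{p}\varphi(x,\tau)\bigr)\;=\;a\lambda^{p}\,\varphi^*\!\Bigl(x,\tfrac{s}{a}\Bigr),
\]
and choosing $s=av$ and $\lambda=(u/(av))^{1/(p-1)}\leq1$ for $0<u<v$ gives $u^{-p'}\varphi^*(x,u)\geq a^{1-p'}v^{-p'}\varphi^*(x,v)$, i.e.\ (aDec)$_{p'}$ with constant $a^{p'-1}$. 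Finally, for the converse direction you cannot simply invoke $\varphi^{**}=\varphi$: the lemma is stated for generalized \emph{weak} $\Phi$-functions, which need not be convex or left-continuous, so biconjugation returns $\varphi$ only up to equivalence of $\Phi$-functions, and one must additionally check that (aInc) and $\nabla_2$ are stable under that equivalence; deferring precisely this to the monograph is circular here, since the lemma being proved is the monograph's own statement.
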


\begin{remark}[{Paragraph after \cite[Lemma 2.2.6 and Definition 2.5.22]{Harjulehto-Hasto-2019}}]
	\label{Re:aInc-Inc-equivalence}
	One important consequence of Lemma \ref{Le:equivalences} is that a generalized convex $\Phi$-function
	\begin{enumerate}
		\item[\textnormal{(i)}] satisfies \textnormal{(aInc)} if and only if it satisfies \textnormal{(Inc)} (possibly with the exponent of \textnormal{(Inc)} smaller than the one of \textnormal{(aInc)});
		\item[\textnormal{(ii)}] satisfies \textnormal{(aDec)} if and only if it satisfies \textnormal{(Dec)}
		(possibly with the exponent of \textnormal{(Dec)} bigger than the one of \textnormal{(aDec)}).
	\end{enumerate}
\end{remark}

For some inequalities that will be used later, we need the following definition from \cite{Diening-Harjulehto-Hasto-Ruzicka-2011}. Note that it is a stronger concept than the generalized strong $\Phi$-function.

\begin{definition}
	Let $\ph \colon [0,+\infty) \to [0,+\infty)$. We say that it is an N-function if it is a convex, strictly increasing function such that $\lim_{t \to 0^+} \ph(t)/t = 0$ and $\lim_{t \to \infty} \ph(t)/t = \infty$.
	
	Let $\ph \colon A \times [0,\infty) \to [0,\infty)$. We say that it is a generalized N-function if $\ph(\cdot,t)$ is measurable for all $t \in [0,\infty)$ and $\ph(x,\cdot)$ is an N-function for a.a. $x \in A$.
\end{definition}

\begin{lemma}[{\cite[Remark 2.6.7]{Diening-Harjulehto-Hasto-Ruzicka-2011}}]
	\label{Le:EquivalenceFirstDerivative}
	Let $\ph \colon A \times [0,+\infty) \to [0,+\infty)$ be a generalized N-function such that it satisfies \textnormal{(aDec)}. Then, there exists $C_1,C_2 > 0$ independent of $x \in A$ such that for a.a.\,$x \in A$ and all $t \in [0,\infty)$,
	\begin{align*}
		C_1 \ph(x,t) \leq \ph'(x,t) t \leq C_2 \ph(x,t).
	\end{align*}
\end{lemma}

\begin{lemma}[{\cite[Lemma 2.6.11]{Diening-Harjulehto-Hasto-Ruzicka-2011}}]
	\label{Le:ConjugateInequality}
	Let $\ph \colon [0,+\infty) \to [0,+\infty)$ be an N-function. Then, for all $t \in [0,\infty)$, it holds 
	\begin{align*}
		\ph^*( \ph'(t) ) \leq t \ph'(t). 
	\end{align*}
\end{lemma}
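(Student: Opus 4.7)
My plan is to reduce the statement to the standard supporting-hyperplane (subgradient) inequality for convex functions. Since $\ph$ is a convex $\Phi$-function that is finite on $[0,\infty)$, at every $t \in [0,\infty)$ the one-sided derivatives exist and lie in the subdifferential $\partial \ph(t)$. Taking $\ph'(t)$ to mean either one-sided derivative (both of which are nonnegative because $\ph$ is strictly increasing), the subgradient inequality
\begin{align*}
	\ph(u) \geq \ph(t) + \ph'(t)(u - t)
\end{align*}
holds for every $u \geq 0$. The hypotheses that $\ph$ be strictly increasing and satisfy (aInc) guarantee, among other things, that $\ph'(t)$ is a well-defined nonnegative real number at every $t \in [0,\infty)$ and is not ambiguous up to the equality we need.

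Next I would rearrange the above as
\begin{align*}
	u\ph'(t) - \ph(u) \leq t\ph'(t) - \ph(t) \qquad \text{for all } u \geq 0
\end{align*}
and take the supremum over $u \geq 0$ on the left-hand side. By the very definition of the conjugate function recalled in the excerpt, this yields
\begin{align*}
	\ph^*(\ph'(t)) \leq t\ph'(t) - \ph(t) \leq t\ph'(t),
\end{align*}
where the last inequality uses $\ph(t) \geq 0$. This is exactly the desired bound.

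The only point that really deserves care is the justification that the chosen value of $\ph'(t)$ is a subgradient of $\ph$ at $t$; for a finite convex function on $[0,\infty)$ this is completely standard, so I do not expect any substantive obstacle. It is worth noting that the argument actually produces the stronger Young-type bound $\ph^*(\ph'(t)) \leq t\ph'(t) - \ph(t)$, which is consistent with the fact that equality in the usual Young inequality is attained precisely at the conjugate argument.
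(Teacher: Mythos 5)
Your argument is correct. Note that the paper does not prove this lemma at all --- it is imported from \cite[Lemma 2.6.11]{Diening-Harjulehto-Hasto-Ruzicka-2011} --- and your proof is precisely the standard one: the subgradient inequality $\ph(u) \geq \ph(t) + \ph'(t)(u-t)$ (valid for any choice of one-sided derivative, which exists since $\ph$ is convex and finite on $[0,\infty)$), combined with the definition $\ph^*(s)=\sup_{u\geq 0}(us-\ph(u))$, gives the sharper bound $\ph^*(\ph'(t)) \leq t\ph'(t) - \ph(t) \leq t\ph'(t)$; the hypotheses of strict monotonicity and (aInc) are not actually needed for this step.
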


\begin{proposition}[{\cite[Lemma 3.1.3, Lemma 3.2.2, Lemma 3.3.5, Theorem 3.3.7, Theorem 3.5.2 and Theorem 3.6.6]{Harjulehto-Hasto-2019}}]
	\label{Prop:AbstractBanach}
	Let $\ph \colon A \times [0,+\infty) \to [0,+\infty]$ be a generalized weak $\Phi$-function and let its associated modular be
	\begin{align*}
		\varrho_\ph (u) = \int_A \ph(x,\abs{u(x)}) \diff \mu (x).
	\end{align*}
	Then, the set
	\begin{align*}
		L^\ph (A) = \{ u \colon A \to \R \text{ measurable}\,:\, \varrho_\ph (\lambda u) < \infty \text{ for some } \lambda > 0 \}
	\end{align*}
	equipped with the associated Luxemburg quasi-norm (i.e. a norm, except it only satisfies a weaker version of the triangle inequality involving a multiplicative constant $K \geq 1$)
	\begin{align*}
		\norm{u}_\ph = \inf \left\lbrace \lambda > 0 \,:\, \varrho_\ph \left( \frac{u}{\lambda} \right)  \leq 1 \right\rbrace
	\end{align*}
	is a quasi Banach space (i.e. it is topologically complete in the equipped quasi-norm). Furthermore, if $\mu(A) < \infty$, every sequence convergent in $\norm{\cdot}_\ph$ is also convergent in measure; if $\ph$ is a generalized convex $\Phi$-function, $\norm{\cdot}_\ph$ is a norm, so $L^\ph (A)$ is a Banach space; if $\ph$ satisfies \textnormal{(aDec)}, it holds that
	\begin{align*}
		L^\ph (A) = \{ u \colon A \to \R \text{ measurable} \,:\, \varrho_\ph (u) < \infty \};
	\end{align*}
	if $\ph$ satisfies \textnormal{(aDec)} and $\mu$ is separable, then $L^\ph (A)$ is separable; and if $\ph$ satisfies \textnormal{(aInc)} and \textnormal{(aDec)}, $L^\ph (A)$ possesses an equivalent, uniformly convex norm, hence it is reflexive.
\end{proposition}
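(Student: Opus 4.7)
The plan is to follow the standard development of Musielak-Orlicz spaces, breaking the statement into its constituent claims and treating each in turn, since the hypotheses build cumulatively.

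First, I would verify the elementary modular properties: when $\ph(x,\cdot)$ is increasing with $\ph(x,0)=0$ and (aInc)$_1$ holds, one checks that $\varrho_\ph$ behaves like a (quasi-)convex modular, and together with $\lim_{t\to\infty}\ph(x,t)=+\infty$ this forces $\norm{u}_\ph = 0 \Leftrightarrow u=0$. The quasi-triangle inequality for $\norm{\cdot}_\ph$ comes from the almost-increasing behavior applied to the sum $(u+v)/(2(\norm{u}_\ph+\norm{v}_\ph))$ inside the modular ball. Completeness would follow by the classical Orlicz argument: extract a subsequence with $\norm{u_n-u_{n+1}}_\ph \leq 2^{-n}$, apply Fatou/monotone convergence to $\varrho_\ph$ on the telescoping sum to produce a pointwise a.e.\ limit $u$, and then pass to the limit in the modular to get convergence in quasi-norm.

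The convergence in measure when $\mu(A)<\infty$ is a Chebyshev-type estimate: if $\norm{u_n-u}_\ph\to 0$, then for any fixed $\delta>0$ one can combine $\varrho_\ph((u_n-u)/\lambda)\leq 1$ with $\ph(x,\delta/\lambda)\to\infty$ as $\lambda\to 0^+$ on a set of positive measure to bound $\mu(\{|u_n-u|>\delta\})$. For the Banach-space claim under convexity of $\ph$, the Luxemburg norm becomes genuinely subadditive by the standard convex-combination trick $(u+v)/(\norm{u}_\ph+\norm{v}_\ph)$ inside $\varrho_\ph$. The identity $L^\ph(A)=\{u\in M(A):\varrho_\ph(u)<\infty\}$ under (aDec) is a direct scaling argument: (aDec)$_q$ yields $\varrho_\ph(\lambda u)\leq a\lambda^q \varrho_\ph(u)$ for $\lambda\geq 1$, so finiteness at any scale gives finiteness at every scale, matching the two set definitions. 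Separability then follows by approximating via simple functions over the separable $\sigma$-algebra, where (aDec) is precisely what allows one to scale and truncate without leaving $L^\ph(A)$.

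Finally, for reflexivity under both (aInc) and (aDec), I would invoke Lemma \ref{Le:equivalences} to deduce that $\ph$ satisfies both $\Delta_2$ and $\nabla_2$. This puts us in the classical Musielak-Orlicz framework where one can construct a uniformly convex equivalent norm (for instance, via a pointwise-in-$x$ modular with integrated Clarkson-type estimates, whose validity is guaranteed by the two-sided growth bounds), and reflexivity then follows from the Milman-Pettis theorem.

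The main obstacle is the reflexivity step: constructing a uniformly convex equivalent norm in the $x$-dependent setting does not reduce to a direct Clarkson inequality, and one has to exploit the $\nabla_2$-condition carefully to get uniform smoothness of the conjugate, then dualize. All other steps reduce, modulo bookkeeping, to the modular-scaling tools already developed earlier in the section, so the only genuinely delicate piece is packaging the norm equivalence in a form that survives to the $x$-dependent integrand.
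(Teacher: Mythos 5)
This proposition is not proved in the paper at all: it is background material quoted, with precise references, from the book of Harjulehto--H\"ast\"o \cite{Harjulehto-Hasto-2019} (Lemmas 3.1.3, 3.2.2, 3.3.5, Theorems 3.3.7, 3.5.2, 3.6.6), so there is no internal proof to compare your attempt against. Measured against the textbook proofs you are implicitly reconstructing, your outline follows the standard route for most of the claims: the scaling argument under (aDec)$_q$ for the identification of $L^\ph(A)$ with the finite-modular set is exactly right, the convexity argument for the triangle inequality is the standard one, and simple-function approximation is indeed how separability is obtained. Two caveats on the quasi-Banach part: since $\norm{\cdot}_\ph$ is only a quasi-norm, the quasi-triangle inequality needs the (aInc)$_1$ constant $a$ tracked explicitly (the relevant scaling is by a factor comparable to $a$, not just $2$), and the completeness proof cannot rely on a naive telescoping series for a quasi-norm; the book instead first proves that norm-Cauchy sequences are Cauchy in measure on sets of finite measure (which is also the content of the ``convergence in measure'' claim, via a Chebyshev-type estimate as you describe), extracts an a.e.\ convergent subsequence, and closes with Fatou's lemma for the modular. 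These are repairable bookkeeping issues rather than wrong ideas.

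The genuine gap is the one you yourself flag: the uniformly convex equivalent norm under (aInc) and (aDec), which is the substance of \cite[Theorem 3.6.6]{Harjulehto-Hasto-2019} and is not a routine consequence of $\Delta_2$ and $\nabla_2$ plus ``Clarkson-type estimates.'' Your proposed detour through uniform smoothness of the conjugate and dualization is a different strategy from the book's, which instead replaces $\ph$ by an equivalent \emph{uniformly convex} $\Phi$-function (this upgrade of (aInc)/(aDec) to pointwise uniform convexity is the hard analytic step) and then verifies uniform convexity of the associated Luxemburg norm directly, with reflexivity following from Milman--Pettis. As written, your sketch asserts the existence of the uniformly convex renorming without supplying the mechanism that produces it, so this step would need to be filled in (or, as the paper does, simply cited) before the argument is complete.
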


\begin{proposition}[{\cite[Lemma 3.2.9]{Harjulehto-Hasto-2019}}]
	\label{Prop:AbstractNormModular}
	Let $\ph \colon A \times [0,+\infty) \to [0,+\infty]$ be a generalized weak $\Phi$-function that satisfies \textnormal{(aInc)}$_p$ and \textnormal{(aDec)}$_q$, with $1 \leq p \leq q < \infty$. Then,
	\begin{align*}
		\frac{1}{a} \min \left\lbrace \norm{u}_\ph^p ,  \norm{u}_\ph^q \right\rbrace
		\leq \varrho_\ph (u)
		\leq a \max \left\lbrace \norm{u}_\ph^p ,  \norm{u}_\ph^q \right\rbrace
	\end{align*}
	for all measurable functions $u \colon A \to \R$, where $a$ is the maximum of the constants of \textnormal{(aInc)}$_p$ and \textnormal{(aDec)}$_q$.
\end{proposition}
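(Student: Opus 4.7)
The plan is to reduce everything to scaling properties of $\ph(x,\cdot)$ and then apply the definition of the Luxemburg norm. First I would rewrite \textnormal{(aInc)}$_p$ and \textnormal{(aDec)}$_q$ in the equivalent homogeneity form: if $a_1,a_2$ are the respective constants and $a = \max\{a_1,a_2\}$, then for a.a.\,$x \in A$, $t \geq 0$, and $\lambda \geq 1$ one has
\begin{align*}
\tfrac{\lambda^p}{a}\,\ph(x,t) \;\leq\; \ph(x,\lambda t) \;\leq\; a\,\lambda^q\,\ph(x,t),
\end{align*}
while for $0 < \lambda \leq 1$ the inequalities are $\tfrac{\lambda^q}{a}\ph(x,t) \leq \ph(x,\lambda t) \leq a\lambda^p \ph(x,t)$. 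This is just a rewriting of the almost monotonicity of $t^{-p}\ph(x,t)$ and $t^{-q}\ph(x,t)$ applied to the pair $(t,\lambda t)$ (respectively $(\lambda t,t)$). Integrating with respect to $\mu$ transfers the same scaling to the modular: for $\lambda \geq 1$,
\begin{align*}
\tfrac{\lambda^p}{a}\,\varrho_\ph(u) \;\leq\; \varrho_\ph(\lambda u) \;\leq\; a\,\lambda^q\,\varrho_\ph(u),
\end{align*}
and symmetrically for $0 < \lambda \leq 1$.

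For the upper bound I would argue as follows. Assume $u \neq 0$ (the case $u=0$ being trivial) and fix $\epsilon > 0$. Set $\mu := \norm{u}_\ph + \epsilon$; by definition of the Luxemburg quasi-norm as an infimum, $\varrho_\ph(u/\mu) \leq 1$. Writing $u = \mu\,(u/\mu)$ and applying the appropriate scaling, I get
\begin{align*}
\varrho_\ph(u) \;\leq\; a\,\mu^q\,\varrho_\ph(u/\mu) \;\leq\; a\,\mu^q \quad\text{if }\mu \geq 1,
\end{align*}
and $\varrho_\ph(u) \leq a\mu^p$ if $\mu \leq 1$. Sending $\epsilon \to 0^+$ yields $\varrho_\ph(u) \leq a\norm{u}_\ph^q$ when $\norm{u}_\ph \geq 1$ and $\varrho_\ph(u) \leq a\norm{u}_\ph^p$ when $\norm{u}_\ph \leq 1$, both of which are bounded by $a\max\{\norm{u}_\ph^p, \norm{u}_\ph^q\}$.

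For the lower bound I would run the mirror argument. For $0 < \epsilon < \norm{u}_\ph$, set $\mu := \norm{u}_\ph - \epsilon$. Since $\mu$ lies strictly below the infimum defining the norm, $\mu$ does not belong to the set $\{\lambda > 0 : \varrho_\ph(u/\lambda) \leq 1\}$, hence $\varrho_\ph(u/\mu) > 1$. Applying the lower half of the modular scaling gives $\varrho_\ph(u) \geq \tfrac{\mu^p}{a}$ if $\mu \geq 1$ and $\varrho_\ph(u) \geq \tfrac{\mu^q}{a}$ if $\mu \leq 1$. Letting $\epsilon \to 0^+$ produces the two estimates $\varrho_\ph(u) \geq \tfrac{1}{a}\norm{u}_\ph^p$ when $\norm{u}_\ph \geq 1$ and $\varrho_\ph(u) \geq \tfrac{1}{a}\norm{u}_\ph^q$ when $\norm{u}_\ph \leq 1$; both majorize $\tfrac{1}{a}\min\{\norm{u}_\ph^p, \norm{u}_\ph^q\}$.

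The only real subtlety is the use of the almost monotonicity constants: the statement specifies that $a$ is the maximum of the two, and it is important to keep track of the fact that integrating preserves the multiplicative constant but not its origin, so the constant $a$ appears simultaneously in the upper and lower directions from the two separate hypotheses. Beyond that the argument is routine splitting into the cases $\mu \geq 1$ and $\mu \leq 1$, together with a careful passage to the limit in $\epsilon$ — no further properties of $\ph$ (such as convexity or left-continuity) are needed since all relevant inequalities are strict/non-strict inequalities that remain stable under $\epsilon \to 0^+$.
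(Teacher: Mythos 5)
Your argument is correct: rewriting (aInc)$_p$/(aDec)$_q$ as the scaling estimates $\tfrac{\lambda^p}{a}\ph(x,t)\le\ph(x,\lambda t)\le a\lambda^q\ph(x,t)$ for $\lambda\ge1$ (and the mirrored ones for $\lambda\le1$), transferring them to the modular, and then perturbing the Luxemburg infimum by $\eps$ (where $\varrho_\ph(u/\mu)\le 1$ for $\mu>\norm{u}_\ph$ uses that $\ph(x,\cdot)$ is increasing, which is part of the definition of a generalized weak $\Phi$-function) is exactly the standard proof of the cited result \cite[Lemma 3.2.9]{Harjulehto-Hasto-2019}, which the paper itself quotes without proof. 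The only degenerate case not literally covered is $\norm{u}_\ph=\infty$, where $\mu=\norm{u}_\ph-\eps$ should be replaced by an arbitrary $\mu>1$ outside the set $\{\lambda>0:\varrho_\ph(u/\lambda)\le1\}$, letting $\mu\to\infty$; this is a trivial adjustment.
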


\begin{proposition}[{\cite[Theorem 3.2.6]{Harjulehto-Hasto-2019}}]
%	\label{Prop:AbstractEmbedding}
	Let $\ph, \psi \colon A \times [0,+\infty) \to [0,+\infty]$ be generalized weak $\Phi$-functions and let $\mu$ be atomless. Then, $L^\ph (A) \hookrightarrow L^\psi (A)$ if and only if there exits $K>0$ and $h \in L^1 (A)$ with $\norm{h}_1 \leq 1$ such that, for all $t \geq 0$ and for a.a.\,$x \in \Omega$,
	\begin{align*}
		\psi\left(  x,\frac{t}{K} \right) \leq \ph (x,t) + h(x).
	\end{align*}
\end{proposition}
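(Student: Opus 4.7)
The plan is to handle the two implications separately, with the ``only if'' direction carrying the essential content.

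For the \emph{sufficiency}, I would fix $u \in L^\ph(A)$ and set $\lambda = \norm{u}_\ph$. By the definition of the Luxemburg quasi-norm, $\varrho_\ph(u/\lambda') \leq 1$ for every $\lambda' > \lambda$. Substituting $t = \abs{u(x)}/\lambda'$ into the pointwise hypothesis and integrating yields
\begin{align*}
	\varrho_\psi\!\l(\frac{u}{K\lambda'}\r) \leq \varrho_\ph\!\l(\frac{u}{\lambda'}\r) + \norm{h}_1 \leq 2 .
\end{align*}
Since $\psi$ is a generalized weak $\Phi$-function it satisfies \textnormal{(aInc)}$_1$, so rescaling the argument by a further factor $c$ depending only on its \textnormal{(aInc)}$_1$-constant drives the modular below $1$, giving $\norm{u}_\psi \leq c K \lambda'$. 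Letting $\lambda' \to \lambda^+$ produces the embedding with constant $cK$.

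For the \emph{necessity}, I would first invoke the closed graph theorem to upgrade the set-theoretic inclusion $L^\ph(A) \subseteq L^\psi(A)$ to a continuous embedding, extracting a constant $C_0 > 0$ with $\norm{u}_\psi \leq C_0 \norm{u}_\ph$ for every $u \in L^\ph(A)$. Choosing $K$ slightly larger than $C_0$, the natural candidate is
\begin{align*}
	h(x) := \sup_{t \in [0,\infty) \cap \mathbb{Q}} \bigl[\psi(x,t/K) - \ph(x,t)\bigr]^+ ,
\end{align*}
which is measurable by left-continuity and monotonicity of $\psi(\cdot,t)$ and $\ph(\cdot,t)$ and which realizes the required pointwise inequality on rationals; the extension to all $t \geq 0$ is then a monotonicity argument. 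The core task is to show $\norm{h}_1 \leq 1$. To that end I would exhaust $A$ by a countable increasing family of sets of finite $\mu$-measure (via $\sigma$-finiteness) and, on each piece, dyadically discretize the $t$-variable. For each dyadic level $t_k$, the \emph{atomlessness} of $\mu$ lets me carve out measurable subsets $E_k$ of prescribed measure on which the excess $\psi(\cdot,t_k/K)-\ph(\cdot,t_k)$ is nearly maximal; testing the embedding against $u_k = t_k \chi_{E_k}$ with $\norm{u_k}_\ph$ tuned to equal $1$ then converts the global inequality $\norm{u_k}_\psi \leq C_0 < K$ into an integral bound on the excess over $E_k$. Summing these estimates across the discretization and the exhaustion produces $\norm{h}_1 \leq 1$.

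The main obstacle will be obtaining the sharp constant $\norm{h}_1 \leq 1$, rather than merely $\norm{h}_1 < \infty$, in the necessity direction: this forces one to calibrate carefully the slack between $K$ and $C_0$, the dyadic step in $t$, and the use of atomlessness to produce subsets of exactly the right measure, so that wastage in each step accumulates to at most $1$. By contrast, the sufficiency direction is essentially bookkeeping once the modular inequality and \textnormal{(aInc)}$_1$ are combined.
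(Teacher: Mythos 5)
The paper itself offers no proof of this proposition: it is quoted as a preliminary from \cite[Theorem 3.2.6]{Harjulehto-Hasto-2019}, so your attempt can only be compared with the argument of that source. Your sufficiency direction is correct and is the standard one: insert $t=\abs{u(x)}/\lambda'$ for $\lambda'>\norm{u}_\ph$, integrate to get $\varrho_\psi(u/(K\lambda'))\leq 2$, and use \textnormal{(aInc)}$_1$ of $\psi$ to trade the bound $2$ for a fixed dilation of the argument.

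The necessity direction, however, has a genuine gap at its central step. Testing the embedding with one-level functions $u_k=t_k\chi_{E_k}$, each separately normalized so that $\norm{u_k}_\ph=1$, only yields for each $k$ a bound of the form $\int_{E_k}\bigl[\psi(x,t_k/K)-\ph(x,t_k)\bigr]\diff\mu\leq 1$. These bounds are each relative to their own unit-ball normalization, the sets $E_k$ for different dyadic levels overlap in $x$, and ``summing across the discretization and the exhaustion'' therefore produces a bound that grows with the number of levels and pieces; it gives neither $\norm{h}_1\leq 1$ nor even an a priori finite bound on $\int_A h\diff\mu$. No calibration of the slack between $K$ and $C_0$ or of the dyadic step repairs this, because the defect is structural: the test function must encode a near-optimal level at \emph{every} point simultaneously. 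The argument behind the cited theorem runs instead as follows: from the embedding pass to the unit-ball implication $\varrho_\ph(u)\leq 1\Rightarrow\varrho_\psi(u/K)\leq 1$; choose a measurable selection $t(\cdot)$ with $\psi(x,t(x)/K)-\ph(x,t(x))\geq(1-\eps)\min\{h(x),1/\eps\}$ on $\{h>0\}$; assume $\int_A h\diff\mu>1$ and pick a finite-measure set $E\subseteq\{h>0\}$ on which the truncated excess still integrates to more than $1$. If $\int_E\ph(x,t(x))\diff\mu\leq 1$, then $u=t\chi_E$ already violates the unit-ball implication. Otherwise the measure $F\mapsto\int_F\ph(x,t(x))\diff\mu$ is atomless (it has an a.e.\ finite density with respect to the atomless $\mu$), hence attains the value $1$ on some $E'\subseteq E$; then $\varrho_\ph(t\chi_{E'})\leq 1$ while $\varrho_\psi(t\chi_{E'}/K)=1+\int_{E'}\bigl[\psi(x,t(x)/K)-\ph(x,t(x))\bigr]\diff\mu>1$, again a contradiction. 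It is this single test function with variable level, constrained only by the total $\ph$-modular budget $1$, that produces the sharp constant in $\norm{h}_1\leq 1$. A smaller inaccuracy: generalized \emph{weak} $\Phi$-functions are not assumed left-continuous in $t$ (that belongs to the convex case), so passing from the rational-$t$ supremum to all $t\geq 0$ only gives $\psi(x,t/K)\leq\ph(x,s)+h(x)$ for every $s>t$; one must either reduce to an equivalent left-continuous function or absorb the resulting factor by enlarging $K$ through \textnormal{(aInc)}$_1$.
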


\begin{proposition}[{\cite[Lemma 3.2.11]{Harjulehto-Hasto-2019}}]
	\label{Prop:AbstractHoelder}
	Let $\ph \colon A \times [0,+\infty) \to [0,+\infty]$ be a generalized weak $\Phi$-function. Then,
	\begin{align*}
		\int_A \abs{u} \abs{v} \diff \mu (x) \leq 2 \norm{u}_{\ph} \norm{v}_{\ph^*} \quad \text{for all } u \in L^{\ph}(A), v \in L^{\ph^*}(A).
	\end{align*}
	Moreover, the constant $2$ is sharp.
\end{proposition}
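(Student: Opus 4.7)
The approach I would take starts from the pointwise Young inequality, which is immediate from the definition of the conjugate: the relation $\ph^*(x,s) = \sup_{t \geq 0}(ts - \ph(x,t))$ gives, for all $s,t \geq 0$ and a.a. $x \in A$,
\begin{align*}
	st \leq \ph(x,t) + \ph^*(x,s).
\end{align*}
This is the only non-trivial ingredient; everything else is arranging the scaling and integrating.

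Assume first that both $\normph{u} > 0$ and $\norm{v}_{\ph^*} > 0$. For any $\lambda > \normph{u}$ and $\kappa > \norm{v}_{\ph^*}$, the definition of the Luxemburg quasi-norm yields $\modph{u/\lambda} \leq 1$ and $\varrho_{\ph^*}(v/\kappa) \leq 1$. Applying Young's inequality pointwise to $\abs{u(x)}/\lambda$ and $\abs{v(x)}/\kappa$, integrating over $A$ against $\mu$, and multiplying through by $\lambda\kappa$, one obtains
\begin{align*}
	\int_A \abs{u}\abs{v} \diff \mu(x) \leq \lambda \kappa \l( \modph{u/\lambda} + \varrho_{\ph^*}(v/\kappa) \r) \leq 2 \lambda \kappa.
\end{align*}
Letting $\lambda \to \normph{u}^+$ and $\kappa \to \norm{v}_{\ph^*}^+$ produces the inequality. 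The case where one of the two norms vanishes is trivial: $\normph{u} = 0$ forces $u = 0$ $\mu$-a.e., since $\ph$ is a generalized weak $\Phi$-function (so in particular $\ph(x,t) \to \infty$ as $t \to \infty$), and then the left-hand side is zero.

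For the sharpness of the constant $2$, the plan is to construct a particular generalized weak $\Phi$-function on a suitable (for instance two-atom) measure space, together with concrete choices of $u$ and $v$, for which the ratio
\begin{align*}
	\int_A \abs{u}\abs{v} \diff \mu(x) \Big/ (\normph{u} \norm{v}_{\ph^*})
\end{align*}
can be pushed arbitrarily close to $2$. The idea is to exploit situations in which the Luxemburg quasi-norm departs substantially from the equivalent Orlicz-type norm (for which the natural constant is $1$); non-convex or non-doubling examples are the natural candidates. I expect this sharpness part to be the main obstacle: the main inequality above is essentially a repackaging of Young's inequality and a limiting argument in the defining infima, whereas the sharpness claim demands an explicit construction tailored to the general Musielak--Orlicz framework.
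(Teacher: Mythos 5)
The paper itself offers no proof here: the proposition is quoted verbatim from \cite[Lemma 3.2.11]{Harjulehto-Hasto-2019}, so the only comparison available is with the standard argument of that source. Your first half coincides with it and is correct: pointwise Young's inequality $st\leq \ph(x,t)+\ph^*(x,s)$ straight from the definition of $\ph^*$, the unit-ball property $\modph{u/\lambda}\leq 1$ for every $\lambda>\normph{u}$ (valid because $\ph$ is increasing in $t$, so the admissible set of $\lambda$'s is upward closed), integration, and letting $\lambda,\kappa$ decrease to the norms. The degenerate case is also fine, though for $\norm{v}_{\ph^*}=0$ you need the symmetric observation that $\ph^*(x,s)\to\infty$ as $s\to\infty$, which follows since $\ph(x,t_0)<\infty$ for some small $t_0>0$.

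The genuine gap is the sharpness of the constant $2$: it is part of the statement, but you only announce a plan to construct an example and do not produce one. It is in fact much easier than you expect --- no two-atom space and no non-convex $\ph$ is needed, and equality (not merely ratios approaching $2$) is attained. Take $\mu(A)=1$ (say $\Omega=(0,1)$ with Lebesgue measure), $\ph(x,t)=\max\{t-1,0\}$, which is a convex, hence weak, generalized $\Phi$-function, and $u\equiv v\equiv 1$. Then $\ph^*(x,s)=s$ for $s\leq 1$ and $\ph^*(x,s)=\infty$ for $s>1$, so $\varrho_{\ph^*}(v/\lambda)\leq 1$ exactly when $\lambda\geq 1$, giving $\norm{v}_{\ph^*}=1$; on the other hand $\modph{u/\lambda}=\max\{1/\lambda-1,0\}\leq 1$ exactly when $\lambda\geq 1/2$, giving $\normph{u}=1/2$. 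Hence $\int_A\abs{u}\abs{v}\diff\mu(x)=1=2\,\normph{u}\norm{v}_{\ph^*}$, so no constant smaller than $2$ can work. Until such an example is written down, the ``moreover'' clause of the proposition remains unproven in your proposal.
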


\begin{proposition}[{\cite[Theorem 6.1.4 and Theorem 6.1.9]{Harjulehto-Hasto-2019}}]
%	\label{Prop:AbstractSobolev}
	Let $\ph \colon \Omega \times [0,+\infty) \to [0,+\infty]$ be a generalized weak $\Phi$-function such that $\Lp{\ph} \subseteq L^1_{\loc} (\Omega)$ and $k\geq 1$. Then, the set
	\begin{align*}
		W^{k,\ph} (\Omega) = \{ u \in \Lp{\ph} \,:\, \partial_\alpha u \in \Lp{\ph} \text{ for all } \abs{\alpha} \leq k \},
	\end{align*}
	where we consider the modular
	\begin{align*}
		\varrho_{k,\ph} (u) = \sum_{0 \leq \abs{\alpha} \leq k } \varrho_\ph(\partial_\alpha u)
	\end{align*}
	and the associated Luxemburg quasi-norm (i.e. a norm, except it only satisfies a weaker version of the triangle inequality involving a multiplicative constant $K \geq 1$)
	\begin{align*}
		\norm{u}_{k,\ph} = \inf \left\lbrace \lambda > 0 : \varrho_{k,\ph} \left( \frac{u}{\lambda} \right)  \leq 1 \right\rbrace
	\end{align*}
	is a quasi Banach space (i.e. it is topologically complete in the equipped quasi-norm). Analogously, the set
	\begin{align*}
		W^{k,\ph}_0 (\Omega) = \overline{C_0^\infty (\Omega)}^{\norm{\cdot}_{k,\ph}},
	\end{align*}
	where $C_0^\infty (\Omega)$ are the functions in $C^\infty (\Omega)$ with compact support, equipped with the same modular and quasi-norm, is also a quasi Banach space.
	
	Furthermore, if $\ph$ is a generalized convex $\Phi$-function, $\norm{\cdot}_{k,\ph}$ is a norm, so both spaces are Banach spaces; if $\ph$ satisfies \textnormal{(aDec)}, then both spaces are separable; and if $\ph$ satisfies \textnormal{(aInc)} and \textnormal{(aDec)}, both spaces possess an equivalent, uniformly convex norm, hence they are reflexive.
\end{proposition}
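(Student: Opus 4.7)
The plan is to reduce the proposition to the single-factor statements of Proposition \ref{Prop:AbstractBanach} through the coordinate embedding
\[
T \colon W^{k,\ph}(\Omega) \to E := \prod_{|\alpha|\leq k} \Lp{\ph},
\qquad T(u) = (\partial_\alpha u)_{|\alpha|\leq k}.
\]
By the very definition of $\varrho_{k,\ph}$, the map $T$ preserves the modular when $E$ is endowed with $(w_\alpha)_\alpha \mapsto \sum_\alpha \varrho_\ph (w_\alpha)$, and is therefore an isometric embedding for the induced Luxemburg (quasi-)norm on $E$. Every structural property of $W^{k,\ph}(\Omega)$ will then follow from the corresponding property of $E$ combined with closedness of $T(W^{k,\ph}(\Omega))$ in $E$.

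First I would establish completeness. Given a Cauchy sequence $(u_n)$ in $W^{k,\ph}(\Omega)$, each coordinate sequence $(\partial_\alpha u_n)_n$ is Cauchy in $\Lp{\ph}$ and hence converges to some $v_\alpha \in \Lp{\ph}$ by Proposition \ref{Prop:AbstractBanach}. To see that $v_0 \in W^{k,\ph}(\Omega)$ with $\partial_\alpha v_0 = v_\alpha$, the standing hypothesis $\Lp{\ph} \subseteq L^1_{\loc}(\Omega)$, the convergence in measure supplied by Proposition \ref{Prop:AbstractBanach}, and the H\"older inequality of Proposition \ref{Prop:AbstractHoelder} together allow one to pass to the limit in the distributional identity
\[
\into (\partial_\alpha u_n)\, \psi \dx = (-1)^{|\alpha|} \into u_n \, \partial_\alpha \psi \dx, \qquad \psi \in C_0^\infty(\Omega),
\]
yielding $\into v_\alpha \psi \dx = (-1)^{|\alpha|} \into v_0 \, \partial_\alpha \psi \dx$. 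Thus $T(W^{k,\ph}(\Omega))$ is closed in $E$, and completeness of $\Lp{\ph}$ transfers to $W^{k,\ph}(\Omega)$; when $\ph$ is a generalized convex $\Phi$-function, Proposition \ref{Prop:AbstractBanach} upgrades the quasi-norm to a genuine norm, making the space Banach.

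The remaining items follow from generalities on finite products and closed subspaces. If $\ph$ satisfies \textnormal{(aDec)}, each factor $\Lp{\ph}$ is separable by Proposition \ref{Prop:AbstractBanach}, a finite product of separable metric spaces is separable, and separability passes to closed subspaces. If $\ph$ satisfies both \textnormal{(aInc)} and \textnormal{(aDec)}, equipping each factor with the equivalent uniformly convex norm furnished by Proposition \ref{Prop:AbstractBanach} and combining them through an $\ell^2$-type sum produces an equivalent uniformly convex norm on $E$, whose restriction to the closed subspace $T(W^{k,\ph}(\Omega))$ remains uniformly convex; reflexivity then follows by the Milman--Pettis theorem, or equivalently because closed subspaces of reflexive spaces are reflexive.

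No further work is needed for $W^{k,\ph}_0(\Omega)$: it is by definition the closure of $C_0^\infty(\Omega)$ in $W^{k,\ph}(\Omega)$, hence a closed subspace, and completeness, the Banach property, separability and reflexivity (together with the existence of an equivalent uniformly convex norm) all descend to closed subspaces. The only genuinely delicate point is the closedness of $T(W^{k,\ph}(\Omega))$ in $E$, i.e.\ the stability of distributional derivatives under $\Lp{\ph}$-limits; once this is secured via the local integrability hypothesis, everything else is routine transfer from the product structure of $E$.
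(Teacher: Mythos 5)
The paper does not actually prove this proposition: it is quoted as background from Harjulehto--H\"ast\"o (Theorems 6.1.4 and 6.1.9), so your proposal is in effect a reconstruction of the proof in the cited book, and its architecture is the standard (and essentially the same) one: the modular-preserving coordinate embedding of $W^{k,\ph}(\Omega)$ into a finite product of copies of $\Lp{\ph}$, closedness of the image via stability of weak derivatives, and then routine transfer of completeness, separability, uniform convexity and reflexivity, with $W^{k,\ph}_0(\Omega)$ inheriting everything as a closed subspace. That outline is correct.

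The one step you should tighten is the passage to the limit in the distributional identity. The hypothesis is only the \emph{set} inclusion $\Lp{\ph} \subseteq L^1_{\loc}(\Omega)$, and the H\"older inequality of Proposition \ref{Prop:AbstractHoelder} helps only if you know that $\psi$ (equivalently $\chi_K$ for $K=\operatorname{supp}\psi$) lies in $L^{\ph^*}(\Omega)$, i.e.\ that $\int_K \ph^*(x,\lambda)\dx<\infty$ for some $\lambda>0$; this is not among the stated hypotheses, and convergence in measure alone does not let you pass to the limit in $\into (\partial_\alpha u_n)\,\psi \dx$. The clean repair is to upgrade the set inclusion to a continuous embedding $\Lp{\ph} \hookrightarrow L^1(K)$ for every compact $K\subset\Omega$ by the closed graph theorem for $F$-spaces (quasi-Banach spaces are completely metrizable): if $u_n\to u$ in $\Lp{\ph}$ and $u_n\to v$ in $L^1(K)$, both convergences imply convergence in measure (Proposition \ref{Prop:AbstractBanach}, $\Omega$ being bounded), so $u=v$ a.e.\ on $K$ and the graph is closed. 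With this, $\partial_\alpha u_n \to v_\alpha$ in $L^1_{\loc}(\Omega)$ and the limit identity follows; this is precisely the auxiliary lemma used in the cited source. The remaining transfers you invoke (separability of finite products and of subspaces, uniform convexity of a finite $\ell^2$-sum and of closed subspaces, Milman--Pettis) are standard and correctly applied.
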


\begin{proposition}[{\cite[Lemma 3.2.9]{Harjulehto-Hasto-2019}}]
	\label{Prop:AbstractoneNormModular}
	Let $\ph \colon \Omega \times [0,+\infty) \to [0,+\infty]$ be a generalized weak $\Phi$-function that satisfies \textnormal{(aInc)}$_p$ and \textnormal{(aDec)}$_q$, with $1 \leq p \leq q < \infty$. Then, 
	\begin{align*}
		\frac{1}{a} \min \left\lbrace \norm{u}_{k,\ph}^p ,  \norm{u}_{k,\ph}^q \right\rbrace
		\leq \varrho_{k,\ph} (u)
		\leq a \max \left\lbrace \norm{u}_{k,\ph}^p ,  \norm{u}_{k,\ph}^q \right\rbrace
	\end{align*}
	for all $u \in W^{k,\ph} (\Omega)$, where $a$ is the maximum of the constants of \textnormal{(aInc)}$_p$ and \textnormal{(aDec)}$_q$.
\end{proposition}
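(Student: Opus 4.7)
The statement is structurally identical to the already-stated Proposition \ref{Prop:AbstractNormModular}, but for the Sobolev modular $\varrho_{k,\ph}$ and Sobolev Luxemburg norm $\norm{\cdot}_{k,\ph}$ instead of their Lebesgue analogues. My plan is to mimic that proof, exploiting the fact that the Sobolev modular is a finite sum of Lebesgue modulars applied to partial derivatives, so scaling properties transfer straightforwardly. Let $a \geq 1$ be the common constant in \textnormal{(aInc)}$_p$ and \textnormal{(aDec)}$_q$.

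First, I would derive the pointwise scaling inequalities for $\ph$. From \textnormal{(aInc)}$_p$ applied with $s = \lambda t$, $\lambda \in (0,1)$, one obtains $\ph(x,\lambda t) \leq a\lambda^p\ph(x,t)$, while \textnormal{(aDec)}$_q$ applied to the same pair gives $\ph(x,\lambda t) \geq a^{-1}\lambda^q \ph(x,t)$. Swapping the roles for $\lambda > 1$ yields the analogous estimates $a^{-1}\lambda^p\ph(x,t)\leq \ph(x,\lambda t) \leq a\lambda^q\ph(x,t)$. Integrating and summing over multi-indices $|\alpha|\leq k$, these transfer to the Sobolev modular: for any $u \in W^{k,\ph}(\Omega)$,
\begin{align*}
  \tfrac{1}{a}\lambda^q\,\varrho_{k,\ph}(u) \leq \varrho_{k,\ph}(\lambda u) \leq a\lambda^p\,\varrho_{k,\ph}(u) \quad (0<\lambda\leq 1),\\
  \tfrac{1}{a}\lambda^p\,\varrho_{k,\ph}(u) \leq \varrho_{k,\ph}(\lambda u) \leq a\lambda^q\,\varrho_{k,\ph}(u) \quad (\lambda\geq 1).
\end{align*}

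Second, I would combine these scaling estimates with the infimum definition of the Luxemburg norm. Set $\lambda_0 := \norm{u}_{k,\ph}$ (the cases $\lambda_0 = 0$ and $\lambda_0=\infty$ are trivial). By the usual argument, $\varrho_{k,\ph}(u/\lambda_0) \leq 1$ (so one plugs $\lambda_0$ in as the scaling factor against $u/\lambda_0$ in the relevant scaling line) to get the upper bound $\varrho_{k,\ph}(u) \leq a\max\{\lambda_0^p,\lambda_0^q\}$. For the lower bound, for any $\lambda < \lambda_0$ one has $\varrho_{k,\ph}(u/\lambda) > 1$, and using the relevant scaling in the opposite direction one obtains $1 < a\lambda^{-p}\varrho_{k,\ph}(u)$ when $\lambda_0 > 1$ and $1 < a\lambda^{-q}\varrho_{k,\ph}(u)$ when $\lambda_0 \leq 1$; letting $\lambda \nearrow \lambda_0$ gives $\varrho_{k,\ph}(u) \geq a^{-1}\min\{\lambda_0^p,\lambda_0^q\}$. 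One has to split the argument cleanly into the regimes $\lambda_0 \leq 1$ and $\lambda_0 > 1$ so that the correct exponent ($p$ or $q$) lands in the $\min$/$\max$.

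There is no real obstacle here — it is essentially bookkeeping. The only point requiring care is that when one uses $\varrho_{k,\ph}(u/\lambda)>1$ for $\lambda<\lambda_0$, one must invoke the scaling inequality in the direction that gives a \emph{lower} bound on $\varrho_{k,\ph}(u)$ in terms of $\lambda$, and match the exponent with the position of $\lambda_0$ relative to $1$; doing this correctly is exactly what produces the $\min$ (respectively $\max$) in the final statement. Since the argument is essentially the Lebesgue case of Proposition \ref{Prop:AbstractNormModular} applied summand-by-summand, one could also simply invoke it on the direct sum of copies of $L^\ph(\Omega)$ indexed by $\{|\alpha|\leq k\}$ and equipped with the modular $\sum_\alpha \varrho_\ph$, which is the shortest route.
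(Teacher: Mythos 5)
Your proposal is correct, and in fact the paper gives no proof of this statement at all: it is quoted directly from \cite[Lemma 3.2.9]{Harjulehto-Hasto-2019}, the same lemma cited for the Lebesgue version (Proposition \ref{Prop:AbstractNormModular}). So your argument reconstructs the cited proof rather than diverging from anything in the paper, and your closing observation is the cleanest formulation: since $\varrho_{k,\ph}(u)=\sum_{0\le\abs{\alpha}\le k}\varrho_\ph(\partial_\alpha u)$ is itself a modular built from $\ph$ and $\norm{u}_{k,\ph}$ is its Luxemburg norm, the pointwise scaling estimates transfer verbatim to the sum, namely $\tfrac{1}{a}\lambda^q\varrho_{k,\ph}(u)\le\varrho_{k,\ph}(\lambda u)\le a\lambda^p\varrho_{k,\ph}(u)$ for $0<\lambda\le1$ and the analogous bounds with $p$ and $q$ swapped for $\lambda\ge1$, after which the Lebesgue argument applies unchanged. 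One small caution: for a generalized \emph{weak} $\Phi$-function the unit-ball property $\varrho_{k,\ph}(u/\norm{u}_{k,\ph})\le1$ may fail, since left-continuity of $\ph(x,\cdot)$ is not assumed (an increasing $\ph$ with a jump gives a counterexample), so you should not plug $\lambda_0=\norm{u}_{k,\ph}$ in directly; instead use that $\varrho_{k,\ph}(u/\lambda)\le1$ for every $\lambda>\lambda_0$ (definition of the infimum plus monotonicity of $\ph$ in the second variable), deduce $\varrho_{k,\ph}(u)\le a\max\{\lambda^p,\lambda^q\}$ for such $\lambda$, and let $\lambda\downarrow\lambda_0$ --- the same limiting device you already employ for the lower bound. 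With that minor adjustment your proof is complete and matches the intended argument behind the citation.
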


%********************************************************************
\section{Beyond the power-law case}\label{Beyond_the_power-law_case}
%********************************************************************

As stated in Section \ref{Power-law_case}, here we are going to deal with operators which do not exhibit a power-law growth, but still can be proven to have the same kind of properties, with special emphasis on the (S$_+$)-property. There exist some previous works which already deal with operators whose growth is governed by a $\Phi$-function, see \cite{Barletta-2020,Barletta-2024,Barletta-Cianchi-2017} and the references therein. However, although some of these works handle generalized $\Phi$-functions, their assumptions always depend on some $\Phi$-function independent of $x$. In this work, we obtain results without assumptions uniform in $x$. 

Operators with a growth of a power-law times a logarithm have been recently studied in \cite[Theorem 4.1 and Theorem 4.4]{Arora-Crespo-Blanco-Winkert-2023} and \cite[Lemma 4.2]{Tran-Nguyen-2023}. In the first reference, monotonicity inequalities like those of Lemma \ref{Le:MonotoneInequality} are modified to further have an increasing function next to the power-law, and then these are used to prove the result, while in the second reference they use a previous, more general formula which involves the second derivative of the corresponding $\Phi$-function, where all terms but the useful one turn out nonnegative and hence they can be ignored. In particular, the following result holds.

\begin{theorem}[{See \cite[Theorem 4.1 and 4.4]{Arora-Crespo-Blanco-Winkert-2023}.}]
	Let $p,q \in C(\close)$ with $1 < p(x) \leq q(x) < p^*(x)$ for all $x \in \close$, $0 \leq \mu \in \Lp{\infty}$, $\hlog(x,t) = t^{p(x)} + \mu(x) t^{q(x)}\log(e+t)$, the operator $A_{\hlog} \colon \WHlogzero \to \left[ \WHlogzero \right] ^*$ be defined as
	\begin{align*}
		\langle A_{\hlog} (u) , v \rangle &= \into \abs{\nabla u}^{p(x)-2} \nabla u \cdot \nabla v \\
		& \quad + \mu(x) \abs{\nabla u}^{q(x)-2}\left[ \log(e + \abs{\nabla u}) + \frac{\abs{\nabla u}}{q(x) (e + \abs{\nabla u})} \right]  \nabla u \cdot \nabla v \dx,
	\end{align*}
	and the functional $I_{\hlog} \colon \WHlogzero \to \R$ be defined as
	\begin{align*}
		I_{\hlog} (u) = \into \frac{ \abs{\nabla u}^{p(x)} }{p(x)} + \mu(x) \frac{ \abs{\nabla u}^{q(x)} }{q(x)}\log(e + \abs{\nabla u}) \dx.
	\end{align*}
	Then, $I_{\hlog}$ is $C^1$, $I_{\hlog}'=A_{\hlog}$, $A_{\hlog}$ satisfies the (S$_+$)-property, and it is strictly monotone, bounded, coercive, and a homeomorphism. 
\end{theorem}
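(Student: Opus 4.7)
The plan is to adapt the arguments developed for the variable-exponent double phase operator to the present setting, the new difficulty being the slowly growing logarithmic weight in the second term. First, I would verify that $\hlog$ is a generalized convex $\Phi$-function satisfying \textnormal{(aInc)}$_{p^-}$ and \textnormal{(aDec)}$_{q^+ + \varepsilon}$ for any small $\varepsilon > 0$ (the latter because $\log(e+t)$ is dominated by any positive power of $t$); together with the standing hypotheses on $p$, $q$ and $\mu$, this places us in the framework of Propositions \ref{Prop:AbstractBanach} and \ref{Prop:AbstractoneNormModular}, so that $\WHlogzero$ is a reflexive, separable Banach space with a uniformly convex equivalent norm and the usual modular-norm comparisons hold.

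Next, the $C^1$ property of $I_{\hlog}$ with $I_{\hlog}' = A_{\hlog}$ follows from a standard dominated convergence argument: Lemma \ref{Le:EquivalenceFirstDerivative} yields $\hlog'(x,t)\, t \leq C\, \hlog(x,t)$, so the integrand of $\langle A_{\hlog}(u), v \rangle$ is controlled by means of the H\"older inequality of Proposition \ref{Prop:AbstractHoelder} applied to the conjugate function $\hlog^*$ (which inherits its own (aInc) and (aDec) via Lemma \ref{Le:equivalences}). This already gives boundedness of $A_{\hlog}$, and coercivity then follows from Lemma \ref{Le:EquivalenceFirstDerivative}, which produces $\langle A_{\hlog}(u), u \rangle \geq C\, \varrho_{\hlog}(\nabla u)$, combined with the lower modular-norm bound in Proposition \ref{Prop:AbstractoneNormModular}.

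The heart of the proof is a pointwise monotonicity inequality for the vector field $\xi \mapsto \hlog'(x,\abs{\xi})\, \xi / \abs{\xi}$. I would decompose it as the sum of the $p(x)$-Laplacian field $\abs{\xi}^{p(x)-2}\xi$ and the log-weighted $q(x)$-Laplacian-type field $\mu(x)\, g_x(\abs{\xi})\, \abs{\xi}^{q(x)-2} \xi$, where $g_x(t) = \log(e+t) + t/(q(x)(e+t))$ is positive and increasing in $t$. The first summand is handled directly by Lemma \ref{Le:MonotoneInequality}. For the second summand, assuming $\abs{\xi} \geq \abs{\eta}$ and $r = q(x) \geq 2$, I would prove a weighted version of the form
\begin{align*}
	\bigl( g_x(\abs{\xi})\, \abs{\xi}^{r-2} \xi - g_x(\abs{\eta})\, \abs{\eta}^{r-2} \eta \bigr) \cdot (\xi - \eta) \geq C_r \, g_x(\abs{\eta}) \, \abs{\xi - \eta}^r
\end{align*}
by writing
\begin{align*}
	g_x(\abs{\xi})\, \abs{\xi}^{r-2} \xi - g_x(\abs{\eta})\, \abs{\eta}^{r-2} \eta = g_x(\abs{\eta}) \bigl( \abs{\xi}^{r-2} \xi - \abs{\eta}^{r-2} \eta \bigr) + \bigl( g_x(\abs{\xi}) - g_x(\abs{\eta}) \bigr) \abs{\xi}^{r-2} \xi,
\end{align*}
applying Lemma \ref{Le:MonotoneInequality} to the first piece and noting that the second piece dotted with $\xi - \eta$ equals $(g_x(\abs{\xi}) - g_x(\abs{\eta}))(\abs{\xi}^r - \abs{\xi}^{r-2}\xi \cdot \eta) \geq 0$ by Cauchy--Schwarz combined with the monotonicity of $g_x$. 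The analogous weighted inequality for $1 < r < 2$ is obtained by the same decomposition together with the second half of Lemma \ref{Le:MonotoneInequality}. Integrating over $\Omega$ then produces strict monotonicity of $A_{\hlog}$: equality forces $\nabla u = \nabla v$ a.e., and the Poincar\'e inequality in $\WHlogzero$ concludes $u = v$.

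Finally, given this pointwise monotonicity, the (S$_+$)-property follows by the classical argument: under the hypotheses $u_n \weak u$ in $\WHlogzero$ and $\limsup_{n \to \infty} \langle A_{\hlog}(u_n), u_n - u \rangle \leq 0$, monotonicity forces the nonnegative integrand $(A_{\hlog}(u_n) - A_{\hlog}(u))$ paired with $\nabla(u_n - u)$ to converge to $0$ in $L^1(\Omega)$, and then the pointwise inequality combined with Proposition \ref{Prop:AbstractoneNormModular} upgrades this to $\nabla u_n \to \nabla u$ in $\LHlog$, hence $u_n \to u$ in $\WHlogzero$. The homeomorphism property then follows from continuity, strict monotonicity and coercivity via the Browder--Minty theorem for surjectivity, with continuity of the inverse obtained from a standard coercivity-plus-(S$_+$) argument. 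I expect the weighted pointwise monotonicity inequality to be the main obstacle, since the constants and case distinction of Lemma \ref{Le:MonotoneInequality} must be threaded through the extra increasing-weight structure, and both regimes $r \geq 2$ and $1 < r < 2$ must be handled so that the resulting bound can be integrated against $\mu(x)\abs{\nabla(u_n - u)}^{q(x)}$ terms in the (S$_+$) step.
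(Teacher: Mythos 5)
Your overall strategy -- modify the power-law monotonicity inequalities of Lemma \ref{Le:MonotoneInequality} so that an increasing weight sits next to the power -- is the same route as the cited source for this theorem, and your treatment of the function-space setup, the $C^1$ property, boundedness, coercivity and strict monotonicity is fine. The genuine gap is in the final (S$_+$) step. The weighted inequality you prove is correct, but it carries the weight $g_x(\abs{\eta})$ with $\abs{\eta}=\min\{\abs{\xi},\abs{\eta}\}$; your decomposition forces this, since putting $g_x(\abs{\xi})$ on the difference of the power fields leaves the remainder $(g_x(\abs{\xi})-g_x(\abs{\eta}))\abs{\eta}^{r-2}\eta\cdot(\xi-\eta)$, which has no sign. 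Uniformly in $x$ all you can say is $g_x\geq 1$, so integrating your bound controls at most $\into \abs{\nabla u_n-\nabla u}^{p(x)}+\mu(x)\abs{\nabla u_n-\nabla u}^{q(x)}\dx$ on the sets where the exponents are $\geq 2$; it never produces the factor $\log(e+\abs{\nabla u_n-\nabla u})$ that the modular of $\LHlog$ requires, so the asserted conclusion ``$\nabla u_n\to\nabla u$ in $\LHlog$ by the pointwise inequality plus Proposition \ref{Prop:AbstractoneNormModular}'' does not follow as stated. Moreover, on the sets where $p(x)<2$ or $q(x)<2$ you only get the degenerate bound with the possibly vanishing weight $(\abs{\nabla u_n}+\abs{\nabla u})^{r-2}$, which needs an additional argument (H\"older with this weight plus uniform boundedness of the modulars of $\nabla u_n$) that the proposal does not address.

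There are two known repairs. One is to prove a sharper weighted inequality in which the increasing function is evaluated at a fixed fraction of $\abs{\xi-\eta}$ rather than at $\min\{\abs{\xi},\abs{\eta}\}$; this is essentially the modification made in the cited reference, and it is more delicate than your two-term splitting because the cases $\abs{\eta}$ small or large relative to $\abs{\xi-\eta}$ must be treated separately. The other is to use the pointwise inequality only to extract a.e. convergence of $\nabla u_n$ along a subsequence, and then to obtain $\varrho_{\hlog}(\nabla u_n-\nabla u)\to 0$ from the generalized Young inequality (Lemma \ref{Le:YoungIneqph}), which gives $I_{\hlog}(u_n)\to I_{\hlog}(u)$, followed by the converse of Vitali's theorem and Vitali's theorem; this is exactly how Proposition \ref{Prop:MonoAndSplus} argues, and the present paper recovers the statement through Theorem \ref{Th:GeneralVersion} with the splitting $\ph_1(x,t)=t^{p(x)}$, $\ph_2(x,t)=\mu(x)t^{q(x)}\log(e+t)$, so that only the $p(x)$-part needs a quantitative monotonicity formula and the logarithmic part only needs monotonicity of its vector field. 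Either fix closes the gap; as written, the last step of your argument would fail.
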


In this context it is also important to mention the work \cite{Diening-Ettwein-2008}. The objective of this paper is different, there is no attempt to prove results like the theorems of Section \ref{Power-law_case} involving the (S$_+$)-property and the other claims. However, key inequalities are significantly generalized in this work, such as, among others, the monotonicity formula that will be used in this section. Its statement can be found below. 

\begin{definition}
	Let $\ph$ be a generalized convex $\Phi$-function. We define
	$a_\ph \colon \Omega \times \R^N \to \R^N$ as 
	\begin{align*}
		a_\ph (x,\xi) &= \nabla_\xi \ph(x,\abs{\xi}) = \ph'(x,\abs{\xi}) \frac{\xi}{\abs{\xi}}.
	\end{align*}
\end{definition}

\begin{lemma}[{See \cite[Lemma 21]{Diening-Ettwein-2008}.}]
	\label{Le:GeneralizedMono}
	Let $\ph$ be an N-function such that it satisfies \textnormal{(aInc)}, \textnormal{(aDec)}, $\ph \in C^2(0,\infty)$, and there exist $C_1,C_2 >0$ such that for all $t \in (0,\infty)$,
	\begin{equation*}
		C_1 \ph'(t) \leq t \ph''(t) \leq C_2 \ph'(t).
	\end{equation*}
	Then, there exists $C>0$ such that, for all $\xi,\eta \in \R^N$,
	\begin{align*}
		\langle a_\ph(\xi) -  a_\ph(\eta) , \xi - \eta \rangle 
		\geq C \ph''( \abs{\xi} + \abs{\eta} ) \abs{\xi - \eta}^2.
	\end{align*}
\end{lemma}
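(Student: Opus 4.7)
My plan is to exploit the fact that $a_\ph$ is the gradient of the radial function $\zeta\mapsto\ph(|\zeta|)$ and to represent the left-hand side as a line integral of a Hessian-type quadratic form. Setting $\gamma(t):=\eta+t(\xi-\eta)$ for $t\in[0,1]$ and applying the fundamental theorem of calculus (interpreted as an improper integral when the segment crosses the origin, where $a_\ph$ extends continuously to $0$ because \textnormal{(aInc)} forces $\ph'(0^+)=0$) gives
\[
\langle a_\ph(\xi)-a_\ph(\eta),\,\xi-\eta\rangle=\int_0^1 \langle Da_\ph(\gamma(t))(\xi-\eta),\,\xi-\eta\rangle\,dt.
\]
A direct computation of $Da_\ph(\zeta)$ for $\zeta\neq 0$ yields for every $w\in\R^N$
\[
\langle Da_\ph(\zeta)w,w\rangle=\ph''(|\zeta|)\,\frac{(w\cdot\zeta)^2}{|\zeta|^2}+\frac{\ph'(|\zeta|)}{|\zeta|}\left(|w|^2-\frac{(w\cdot\zeta)^2}{|\zeta|^2}\right),
\]
and the hypothesis $t\ph''(t)\le C_2\ph'(t)$, rewritten as $\ph'(t)/t\ge\ph''(t)/C_2$, makes both nonnegative coefficients at least $\min\{1,1/C_2\}\,\ph''(|\zeta|)$. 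Combined with the decomposition $|w|^2=(w\cdot\zeta)^2/|\zeta|^2+\bigl(|w|^2-(w\cdot\zeta)^2/|\zeta|^2\bigr)$, this produces the pointwise bound $\langle Da_\ph(\zeta)w,w\rangle\ge c\,\ph''(|\zeta|)\,|w|^2$.

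Substituting this into the line integral, the proof reduces to the scalar estimate
\[
\int_0^1 \ph''(|\gamma(t)|)\,dt\ \ge\ \widetilde c\,\ph''(|\xi|+|\eta|),
\]
and this is what I expect to be the main obstacle. Here the full two-sided hypothesis $C_1\ph'(t)\le t\ph''(t)\le C_2\ph'(t)$ enters crucially: integrating $\ph''(t)/\ph'(t)\in[C_1/t,\,C_2/t]$ shows that $\ph'$ satisfies \textnormal{(Inc)}$_{C_1}$ and \textnormal{(Dec)}$_{C_2}$, and through $\ph''\asymp\ph'/t$ this upgrades to the two-sided comparability $(s/t)^{C_2-1}\lesssim\ph''(s)/\ph''(t)\lesssim(s/t)^{C_1-1}$ for $0<s\le t$. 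In particular $\ph''(\lambda s)\asymp\ph''(s)$ whenever $\lambda$ stays in any fixed compact subinterval of $(0,\infty)$.

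I would then prove the scalar estimate by case analysis, assuming without loss of generality $|\xi|\ge|\eta|$. If $|\xi-\eta|\le|\xi|/2$, the triangle inequality confines $|\gamma(t)|$ to the interval $[|\xi|/2,\,3|\xi|/2]\asymp|\xi|+|\eta|$ uniformly in $t$, so the pointwise comparability of $\ph''$ finishes the case. Otherwise $|\xi-\eta|\asymp|\xi|+|\eta|$; in this regime I split $[0,1]$ at the (possibly interior) minimizer of $t\mapsto|\gamma(t)|$ into at most two intervals of monotonicity, and on each of them perform the change of variables $s=|\gamma(t)|$, whose Jacobian $|d|\gamma|/dt|=|(\xi-\eta)\cdot\gamma(t)/|\gamma(t)||$ is bounded by $|\xi-\eta|$, to obtain
\[
\int \ph''(|\gamma(t)|)\,dt\ \ge\ \frac{1}{|\xi-\eta|}\bigl(\ph'(|\gamma|_{\max})-\ph'(|\gamma|_{\min})\bigr).
\]
A final subdivision according to whether $|\gamma|_{\min}$ is much smaller than $|\xi|$ (in which case \textnormal{(Inc)}$_{C_1}$ of $\ph'$ absorbs the subtracted term) or comparable to $|\xi|$ (in which case $|\gamma(t)|\asymp|\xi|+|\eta|$ throughout $[0,1]$ and the pointwise argument of the first case applies), together with Lemma \ref{Le:EquivalenceFirstDerivative} applied to $\ph'$ to rewrite $\ph'(|\xi|)/|\xi|$ as $\ph''(|\xi|+|\eta|)$ up to constants, closes the proof.
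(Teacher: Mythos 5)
Your argument is correct, but note that the paper itself does not prove this lemma at all: it is imported verbatim from Diening--Ettwein \cite[Lemma 21]{Diening-Ettwein-2008}, so what you have written is a self-contained reconstruction rather than an alternative to a proof given in the text. Your route is essentially the one used in that reference: represent $\langle a_\ph(\xi)-a_\ph(\eta),\xi-\eta\rangle$ as a line integral of the Hessian form, bound the form below by $c\,\ph''(\abs{\zeta})\abs{w}^2$ using $t\ph''(t)\le C_2\ph'(t)$, and then prove the scalar comparison $\int_0^1\ph''(\abs{\gamma(t)})\,dt\gtrsim\ph''(\abs{\xi}+\abs{\eta})$; Diening--Ettwein phrase this last step as the equivalent estimate $\int_0^1\ph'(\abs{\gamma(t)})/\abs{\gamma(t)}\,dt\sim\ph'(\abs{\xi}+\abs{\eta})/(\abs{\xi}+\abs{\eta})$, which under $\ph''\asymp\ph'/t$ is the same statement, and your case analysis ($\abs{\xi-\eta}\le\abs{\xi}/2$ versus $\abs{\xi-\eta}\asymp\abs{\xi}+\abs{\eta}$, change of variables $s=\abs{\gamma(t)}$, then $\ph'(m)\le\tfrac12\ph'(\abs{\xi})$ via \textnormal{(Inc)}$_{C_1}$ of $\ph'$) is the standard way to verify it, with all constants depending only on $C_1,C_2$ as required. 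Two small points of hygiene: the final appeal to Lemma \ref{Le:EquivalenceFirstDerivative} ``applied to $\ph'$'' is misattributed --- what you actually use there is the assumed two-sided inequality $C_1\ph'(t)\le t\ph''(t)\le C_2\ph'(t)$ together with \textnormal{(Dec)}$_{C_2}$ of $\ph'$ to pass from $\abs{\xi}$ to $\abs{\xi}+\abs{\eta}$; and the fundamental-theorem step when the segment crosses the origin deserves one line, namely that $\ph''(\abs{\gamma(t)})\lesssim\abs{\gamma(t)}^{C_1-1}$ (from \textnormal{(Inc)}$_{C_1}$ of $\ph'$) is integrable across the zero of $\gamma$, so $t\mapsto a_\ph(\gamma(t))$ is absolutely continuous --- alternatively, one can simply obtain that degenerate configuration by continuity of both sides, since segments avoiding the origin are dense.
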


\begin{remark}
	\label{Re:GeneralizedMono}
	Note that this is a result only for N-functions and not generalized N-functions. This means that, given a generalized N-function $\ph$ with good enough properties, we could then apply it to each $\ph(x,\cdot)$ with $x \in \Omega$, but then the constant $C$ would depend on $x$, that is 
	\begin{align*}
		( a_\ph(x,\xi) -  a_\ph(x,\eta) ) \cdot ( \xi - \eta ) 
		\geq C_x \ph''(x, \abs{\xi} + \abs{\eta} ) \abs{\xi - \eta}^2.
	\end{align*}
	
	However, for our purposes, the fact that $C$ depends on $x$ will not be an obstacle, as it will be seen later. Another thing to note is that we do not even know if $C_x$ is measurable, so it must not appear in any integrand.
\end{remark}

The result above motivates the following assumption.

\begin{definition}
	Let $\ph$ be a generalized N-function. We say that it fulfills \textnormal{(Mono1)} if it satisfies \textnormal{(aInc)}, \textnormal{(aDec)}, $\ph(x,\cdot) \in C^2(0,\infty)$ for a.a. $x \in \Omega$, and there exist $C_{1,x},C_{2,x} >0$ such that, for all $t \in (0,\infty)$ and a.a. $x \in \Omega$,
	\begin{equation*}
		C_{1,x} \ph'(x,t) \leq t \ph''(x,t) \leq C_{2,x} \ph'(x,t).
	\end{equation*}
\end{definition}

Based on the inequality above, we prove novel, more general results in the style of those mentioned in Section \ref{Power-law_case} and Section \ref{Beyond_the_power-law_case}. Let us introduce now the operator that will be studied in this section. 

\begin{definition}
	Let $\ph$ be a generalized convex $\Phi$-function. We define $A_\ph \colon \Wphzero$ $\to \left[ \Wphzero \right] ^*$ as 
	\begin{align*}
		\langle A_{\ph} (u) , v \rangle &= \into a_\ph (x,\nabla u) \cdot \nabla v \dx 
		= \into \ph'(x,\abs{ \nabla u }) \frac{ \nabla u }{\abs{ \nabla u }} \cdot \nabla v \dx,
	\end{align*}
	and the functional $I_{\ph} \colon \Wphzero \to \R$ as
	\begin{align*}
		I_{\ph} (u) = \into \ph(x,\abs{ \nabla u }) \dx.
	\end{align*}
\end{definition}

First, the differentiability of $I_{\ph}$ is proved under strong enough assumptions.

\begin{proposition} \label{Prop:Differentiabilityph}
	Let $\ph$ be a generalized N-function such that it satisfies \textnormal{(aInc)}, \textnormal{(aDec)}, and $\ph(x,\cdot) \in C^1(0,\infty)$ for a.a.\,$x \in \Omega$. Then, $I_{\ph}$ is $C^1$ and $I_{\ph}' =  A_{\ph}$.
\end{proposition}

\begin{proof}
	As is usual in this kind of results, the proof is divided in two parts: the Gateaux differentiability and the continuity of the derivative.
	
	We begin with the Gateaux differentiability. Let $u,v \in \Wphzero$ and $t \in \R$. Considering the points where $\nabla u \neq 0$, the mean value theorem yields $\theta_{x,t} \in (0,1)$ such that
	\begin{align*}
		\frac{\ph(x,\abs{\nabla u + t \nabla h}) - \ph(x,\abs{\nabla u})}{t} 
		& = \ph'(x,\abs{\nabla u + t \theta_{x,t} \nabla h}) \frac{\nabla u + t \theta_{x,t} \nabla h}{\abs{\nabla u + t \theta_{x,t} \nabla h}} \cdot \nabla h \\
		& \xrightarrow{t \to 0} \ph'(x,\abs{\nabla u }) \frac{\nabla u}{\abs{\nabla u }} \cdot \nabla h.
	\end{align*}
	On the other hand, note that by Remark \ref{Re:aInc-Inc-equivalence}, $\ph$ satisfies \textnormal{(Inc)}, which implies $\ph'(x,0)=0$ for a.a.\,$x \in \Omega$. For this reason, the limit above also holds in the points where $\nabla u = 0$. Next, for $t \leq 1$, since $\theta_{x,t} \in (0,1)$ and by Lemma \ref{Le:EquivalenceFirstDerivative}, we have that
	\begin{align*}
		\ph'(x,\abs{\nabla u + t \theta_{x,t} \nabla h}) \frac{\nabla u + t \theta_{x,t} \nabla h}{\abs{\nabla u + t \theta_{x,t} \nabla h}} \cdot \nabla h 
		& \leq \ph'(x,\abs{\nabla u} + \abs{\nabla h}) (\abs{\nabla u} + \abs{\nabla h}) \\
		& \leq C \ph(x,\abs{\nabla u} + \abs{\nabla h}) \\
		& \leq C \ph(x,2 \max \{\abs{\nabla u} , \abs{\nabla h}\}) \\
		& \leq C K \ph(x,\max \{\abs{\nabla u} , \abs{\nabla h}\}) \\
		& \leq C K \left[ \ph(x, \abs{\nabla u} ) + \ph(x, \abs{\nabla h} ) \right] ,
	\end{align*}
	which is an $L^1$-majorant uniform in $t$. Hence, by the dominated convergence theorem, the Gateaux derivative exists and is given by $A_\ph$.
	
	We prove now that $I_\ph$ is $C^1$. Let $u_n \to u$ in $\Wphzero$ and $v \in \Wphzero$ with $\normoneph{v} \leq 1$. By the H\"older's inequality, see Proposition \ref{Prop:AbstractHoelder},
	\begin{align*}
		\abs{ \langle A_{\ph} (u_n) - A_{\ph} (u) , v \rangle} 
		& \leq \into \abs{a_{\ph} (\nabla u_n) - a_{\ph} (\nabla u)} \abs{\nabla v} \dx \\
		& \leq 2 \norm{a_{\ph} (\nabla u_n) - a_{\ph} (\nabla u)}_{\ph^*}.
	\end{align*}
	In order to see that this norm converges to zero, we prove that the modular of the same function converges to zero via Vitali's theorem. By the strong convergence in $\Wphzero$ and Proposition \ref{Prop:AbstractBanach}, $\nabla u_n \to \nabla u$ in measure. By straightforward computations (pass to an a.e. convergent subsequence and apply the subsequence principle), we have that $a_{\ph} (\nabla u_n) \to a_{\ph} (\nabla u)$ in measure. For the uniform integrability, we repeat the argument with the $\Delta_2$-condition used above, and from Lemma \ref{Le:ConjugateInequality} and Lemma \ref{Le:EquivalenceFirstDerivative}, we get for a.a.\,$x \in \Omega$,
	\begin{align*}
		\ph^*(x, \abs{a_{\ph} (\nabla u_n) - a_{\ph} (\nabla u)} )
		& \leq \ph^*(x, \ph' (x,\abs{\nabla u_n}) + \ph' (x,\abs{\nabla u}) ) \\
		& \leq K \left[  \ph^*(x, \ph' (x,\abs{\nabla u_n})) + \ph^*(x,\ph' (x,\abs{\nabla u}) ) \right] \\
		& \leq K \left[  \abs{\nabla u_n} \ph' (x,\abs{\nabla u_n}) + \abs{\nabla u} \ph' (x,\abs{\nabla u})  \right] \\
		& \leq K C \left[  \ph (x,\abs{\nabla u_n}) + \ph (x,\abs{\nabla u})  \right]. 
	\end{align*}
	Since $\modph{\nabla u_n - \nabla u} \to 0$, the previous expression is uniformly integrable, and by Vitali's theorem we conclude $\varrho_{\ph^*} (a_{\ph} (\nabla u_n) - a_{\ph} (\nabla u)) \to 0$, or equivalently $\norm{a_{\ph} (\nabla u_n) - a_{\ph} (\nabla u)}_{\ph^*} \to 0$.
\end{proof}

Next, we prove monotonicity and compactness-type conditions of the operator $A_\ph$. For this purpose, we need a new lemma and two extra assumptions. Since in $A_\ph$ we are working only with the gradients in the operator, possibly in the context of a boundary value problem with homogeneous Dirichlet boundary conditions, one needs a Poincar\'e inequality to obtain the right norm in the space. It is possible to give general conditions on $\ph$ to ensure that this kind of inequality holds, see for example \cite[Theorem 6.2.8]{Harjulehto-Hasto-2019} or \cite[Theorem 1.1 and 1.2]{Fan-2012}. However, the assumptions of these results are by no means sharp, and depending on the specific operator one can achieve such an inequality with weaker assumptions, even for operators with complicated shapes, see for example \cite[Proposition 2.18]{Crespo-Blanco-Gasinski-Harjulehto-Winkert-2022} or \cite[Proposition 3.9]{Arora-Crespo-Blanco-Winkert-2023}. It is for this reason that it makes more sense for our results to assume Poincar\'e inequality and leave its proof for each specific considered function. The other assumption plays a role in the (S$_+$)-property.

\begin{definition}
	Let $\ph$ be a generalized weak $\Phi$-function. We say that it satisfies \textnormal{(Poin)} if $\Wph \hookrightarrow \Lph$ compactly. 
\end{definition}

\begin{remark}
	The assumption \textnormal{(Poin)} implies that there exists $C>0$ such that, for all $u \in \Wphzero$,
	\begin{align*}
		\normph{u} \leq C \normph{\nabla u}.
	\end{align*}
	This can be proven with standard arguments (see, for example, \cite[Proposition 2.18]{Crespo-Blanco-Gasinski-Harjulehto-Winkert-2022}). Furthermore, this implies that we can equip $\Wphzero$ with a norm equivalent to $\normoneph{\cdot}$ given by
	\begin{align*}
		\normonephzero{u} = \normph{ \nabla u}.
	\end{align*}
\end{remark}

\begin{definition}
	Let $\ph$ be a generalized convex $\Phi$-function such that $\ph(x,\cdot) \in C^2(0,\infty)$ for a.a.\,$x \in \Omega$. We say it satisfies \textnormal{(Mono2)} if for a.a.\,$x \in \Omega$, either $\ph''(x,\cdot)$ is almost increasing, or $\ph''(x,\cdot)$ is almost decreasing and for any $c>0$, $\lim_{t \to \infty} \ph''(x,c+t) (c-t)^2 = \infty$. The constant for almost increasing/decreasing $a_x$ depends on $x \in \Omega$.
\end{definition}

\begin{lemma}[Young's inequality for generalized N-functions.]
	\label{Le:YoungIneqph}
	Let $\ph$ be a generalized N-function. Then, for all $t \in [0,\infty)$ and a.a.\,$x \in \Omega$, it holds that
	\begin{align*}
		s \ph'(x,t) \leq \ph(x,s) + t \ph'(x,t) - \ph(x,t).
	\end{align*}
\end{lemma}

\begin{proof}
	Let $h \colon [0,\infty) \to [0,\infty)$ be continuous and strictly increasing, and with $h(0)=0$. By the classical result \cite[Theorem 156]{Hardy-Littlewood-Polya-1934} we know that
	\begin{align*}
		s h(t) \leq \int_0^s h(r) \dr + \int_0^{h(t)} h^{-1} (r) \dr,
	\end{align*}
	and by another classical result we can reshape the integral of the inverse as
	\begin{align*}
		\int_0^{h(t)} h^{-1} (r) \dr = t h(t) - \int_0^t h(r) \dr,
	\end{align*}
	which taken together yields
	\begin{align*}
		s h(t) \leq \int_0^s h(r) \dr + t h(t) - \int_0^t h(r) \dr.
	\end{align*}
	For each fixed $x \in \Omega$, inserting $h(t) = \ph'(x,t)$, one obtains the desired result.
\end{proof}

\begin{proposition}
	\label{Prop:MonoAndSplus}
	Let $\ph$ be a generalized N-function that satisfies \textnormal{(Mono1)}. Then the operator $A_\ph$ is monotone. Furthermore, if $\ph$ satisfies $\Lph \hookrightarrow \Lp{1}$, then the operator $A_\ph$ is strictly monotone, and if $\ph$ satisfies \textnormal{(Poin)} and \textnormal{(Mono2)}, then $A_\ph$ is strictly monotone and fulfills the (S$_+$)-property.
\end{proposition}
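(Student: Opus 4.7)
The plan is to prove the three claims in sequence, with the pointwise inequality of Lemma \ref{Le:GeneralizedMono} (applied to $\ph(x,\cdot)$ for a.a. fixed $x$, as per Remark \ref{Re:GeneralizedMono}) as the central tool:
\begin{align*}
	(a_\ph(x,\xi) - a_\ph(x,\eta)) \cdot (\xi - \eta) \geq C_x \ph''(x,\abs{\xi} + \abs{\eta}) \abs{\xi - \eta}^2 \geq 0.
\end{align*}
Setting $\xi = \nabla u$, $\eta = \nabla v$ and integrating gives monotonicity of $A_\ph$; the possibly non-measurable $C_x$ is only used pointwise and never integrated. For strict monotonicity, vanishing of $\langle A_\ph(u) - A_\ph(v), u - v\rangle$ forces the integrand to be zero a.e., and (Mono1) guarantees $\ph''(x,t) > 0$ for $t > 0$, so $\nabla u = \nabla v$ a.e. Under $\Lph \hookrightarrow \Lp{1}$ this gives $u - v \in \Wpzero{1}$ with zero weak gradient and thus $u = v$; under (Poin), the Poincar\'e-type inequality in the remark after its definition yields the same conclusion.

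For the (S$_+$)-property, let $u_n \weak u$ in $\Wphzero$ with $\limsup_{n\to\infty} \langle A_\ph(u_n), u_n - u\rangle \leq 0$. Monotonicity together with weak convergence (applied to the fixed functional $A_\ph(u)$) produces $\lim_{n\to\infty} \langle A_\ph(u_n) - A_\ph(u), u_n - u\rangle = 0$, and the pointwise bound forces $C_x \ph''(x,\abs{\nabla u_n} + \abs{\nabla u}) \abs{\nabla u_n - \nabla u}^2 \to 0$ in $L^1$, hence a.e.\ along a subsequence. Now (Mono2) upgrades this to $\nabla u_n \to \nabla u$ a.e.: in the almost-increasing case, $\ph''(x,\abs{\nabla u_n} + \abs{\nabla u}) \geq \ph''(x,\abs{\nabla u})/a_x > 0$ on $\{\abs{\nabla u}>0\}$ directly forces $\abs{\nabla u_n - \nabla u} \to 0$, while on $\{\abs{\nabla u}=0\}$ the chain $\abs{\nabla u_n}^2 \ph''(x,\abs{\nabla u_n}) \geq C_{1,x} \abs{\nabla u_n} \ph'(x,\abs{\nabla u_n}) \geq C \ph(x,\abs{\nabla u_n})$ (from (Mono1) and Lemma \ref{Le:EquivalenceFirstDerivative}) drives $\abs{\nabla u_n} \to 0$; in the almost-decreasing case, the divergence hypothesis $\lim_{t\to\infty} \ph''(x,c+t)(c-t)^2 = \infty$ combined with $\abs{\nabla u_n - \nabla u} \geq \bigl| \abs{\nabla u_n} - \abs{\nabla u} \bigr|$ precludes $\abs{\nabla u_n} \to \infty$, and almost-decreasing provides a strictly positive lower bound on $\ph''$ along any bounded subsequence.

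It remains to promote a.e.\ convergence to norm convergence. The inequality
\begin{align*}
	\ph(x,\abs{\nabla u_n}) - \ph(x,\abs{\nabla u}) \leq a_\ph(x,\nabla u_n) \cdot (\nabla u_n - \nabla u),
\end{align*}
obtained from Young's inequality of Lemma \ref{Le:YoungIneqph} combined with Cauchy-Schwarz on the radial structure of $a_\ph$, integrates to $\limsup_{n\to\infty} \varrho_\ph(\nabla u_n) \leq \varrho_\ph(\nabla u)$, and Fatou's lemma yields equality. A Brezis-Lieb-type Vitali argument then produces $\varrho_\ph(\nabla u_n - \nabla u) \to 0$, the uniform integrability being supplied by (aDec) via the bound $\ph(x,\abs{\nabla u_n - \nabla u}) \leq K(\ph(x,\abs{\nabla u_n}) + \ph(x,\abs{\nabla u}))$. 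Proposition \ref{Prop:AbstractNormModular} converts modular convergence into norm convergence, and the Urysohn subsequence principle lifts subsequential convergence to convergence of the whole sequence. The main obstacle is the almost-decreasing subcase of (Mono2), where ruling out $\abs{\nabla u_n} \to \infty$ requires precise exploitation of the divergence hypothesis on $\ph''$ matched against the $\abs{\nabla u_n - \nabla u}^2$ factor.
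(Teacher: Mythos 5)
Your proposal is correct and follows essentially the same route as the paper: the pointwise inequality of Lemma \ref{Le:GeneralizedMono} with Remark \ref{Re:GeneralizedMono} for (strict) monotonicity, the (Mono2) case split to get a.e.\ convergence of the gradients along a subsequence, then Lemma \ref{Le:YoungIneqph} plus Fatou and a Vitali argument with the $\Delta_2$-domination to pass from modular to norm convergence. The only differences are cosmetic (splitting $\{\abs{\nabla u}>0\}$ and $\{\abs{\nabla u}=0\}$ in the almost-increasing case, and using a.e.-subsequences plus the subsequence principle where the paper phrases the claim as convergence in measure), so no further comment is needed.
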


\begin{proof}
	For the monotonicity, consider $u,v \in \Wphzero$. Lemma \ref{Le:GeneralizedMono} and Remark \ref{Re:GeneralizedMono} yield that a.e. in $\Omega$,
	\begin{align*}
		& \quad ( a_\ph(x,\nabla u) -  a_\ph(x,\nabla v) ) \cdot ( \nabla u - \nabla v ) \\
		& \geq C_x \ph''(x, \abs{\nabla u} + \abs{\nabla v} ) \abs{\nabla u - \nabla v}^2  \geq 0.
	\end{align*}
	Therefore, $\langle A_\ph(u)- A_\ph(v), u - v \rangle \geq 0$. Furthermore, if $\langle A_\ph(u)- A_\ph(v), u - v \rangle = 0$, we know that for a.a. $x \in \Omega$,
	\begin{align*}
		0 & = ( a_\ph(x,\nabla u) -  a_\ph(x,\nabla v) ) \cdot ( \nabla u - \nabla v ) \\
		& \geq C_x \ph''(x, \abs{\nabla u} + \abs{\nabla v} ) \abs{\nabla u - \nabla v}^2 ,
	\end{align*}
	hence $\nabla u = \nabla v$ a.e. in $\Omega$. If $\Lph \hookrightarrow \Lp{1}$, then $\Wphzero \hookrightarrow \Wpzero{1}$, so $u = v$ a.e. in $\Omega$. Alternatively, if \textnormal{(Poin)} is satisfied, it also follows that $u = v$ a.e. in $\Omega$.
	
	For the (S$_+$)-property, consider a sequence $u_n \weak u $ in $\Wphzero$ which also satisfies $\limsup_{n\to \infty} \langle A_\ph (u_n),u_n-u\rangle \leq 0$. By the strict monotonicity and weak convergence, we can deduce that
	\begin{equation*}
		\lim_{n\to \infty} \langle A_\ph (u_n) - A_\ph (u) ,u_n-u\rangle = 0.
	\end{equation*}
	
	\noindent {\bf Claim:} $\nabla u_n \to \nabla u$ in measure. 
	
	From the limit above, we derive that there exists a subsequence $u_{n_k}$ such that
	\begin{align*}
		( a_\ph(\cdot,\nabla u) -  a_\ph(\cdot,\nabla u_{n_k}) ) \cdot ( \nabla u - \nabla u_{n_k} ) \to 0 \qquad \text{a.e. in } \Omega.
	\end{align*}
	Using Lemma \ref{Le:GeneralizedMono} and Remark \ref{Re:GeneralizedMono} yields
	\begin{align*}
		\ph''(\cdot, \abs{\nabla u} + \abs{\nabla u_{n_k}} ) \abs{\nabla u - \nabla u_{n_k}}^2 \to 0 \qquad \text{a.e. in } \Omega.
	\end{align*}
	We divide the proof in two cases now. In the subset of $\Omega$ where $\ph''(x,\cdot)$ is almost increasing, by assumption \textnormal{(Mono1)} and Lemma \ref{Le:EquivalenceFirstDerivative} we have that
	\begin{align*}
		& \quad \ph''(x, \abs{\nabla u} + \abs{\nabla u_{n_k}} ) \abs{\nabla u - \nabla u_{n_k}}^2 \\
		& \geq a_x^{-1} \ph''(x, \abs{\nabla u - \nabla u_{n_k}} ) \abs{\nabla u - \nabla u_{n_k}}^2 \\
		& \geq a_x^{-1} C \widetilde{C}_x \ph(x, \abs{\nabla u - \nabla u_{n_k}} ),
	\end{align*}
	which implies that $\nabla u_{n_k} \to \nabla u$ a.e. in that subset. On the other hand, in the complementary subset, by the limit assumption of \textnormal{(Mono2)}, we have that there exists $m(x) \geq 1$  such that $\abs{\nabla u_{n_k}} \leq m(x)$ for a.a.\,$x \in \Omega$. Thus, as $\ph''(x,\cdot)$ is almost decreasing,  
	\begin{align*}
		& \quad \ph''(x, \abs{\nabla u} + \abs{\nabla u_{n_k}} ) \abs{\nabla u - \nabla u_{n_k}}^2 \\
		& \geq a_x^{-1} \ph''(x, \abs{\nabla u} + m(x) ) \abs{\nabla u - \nabla u_{n_k}}^2, 
	\end{align*}
	and therefore $\nabla u_{n_k} \to \nabla u$ a.e. in this subset. Altogether, $\nabla u_{n_k} \to \nabla u$ a.e. in $\Omega$, and by the subsequence principle we can recover the whole sequence.
	
	Next, by Lemma \ref{Le:YoungIneqph}, we know
	\begin{align*}
		\langle A_\ph (u_n),u_n-u\rangle 
		& \geq \into \ph'(x, \abs{\nabla u_n} ) \abs{\nabla u_n} - \ph'(x, \abs{\nabla u_n} ) \abs{\nabla u} \dx \\
		& \geq \into \ph(x,\abs{\nabla u_n}) - \ph(x,\abs{\nabla u}) \dx
	\end{align*}
	which together with the limit superior assumption implies
	\begin{align*}
		\limsup_{n\to \infty} I_\ph (u_n) \leq I_\ph(u).
	\end{align*}
	On the other hand, by Fatou's Lemma for $\Phi$-functions (see, for example, \cite[Lemma 3.1.4]{Harjulehto-Hasto-2019}), we can obtain the reverse inequality with the limit inferior, and thus
	\begin{align*}
		\lim_{n\to \infty} I_\ph (u_n) = I_\ph(u).
	\end{align*}
	Using the claim from above, it is straightforward to also check that the integrand of the left-hand side converges in measure to the one in the right-hand side (pass to an a.e. convergent subsequence and apply the subsequence principle). The so-called converse of Vitali's theorem yields the convergence $\ph(\cdot,\abs{\nabla u_n}) \to \ph(\cdot,\abs{\nabla u})$ in $\Lp{1}$, so in particular the sequence $\ph(\cdot,\abs{\nabla u_n})$ is uniformly integrable. On the other hand, 
	\begin{align*}
		\ph(x,\abs{\nabla u_n - \nabla u}) 
		& \leq \ph(x,2 \max\{ \abs{\nabla u_n} , \abs{\nabla u} \}) \\
		& \leq K \ph(x,\max\{ \abs{\nabla u_n} , \abs{\nabla u} \}) \\
		& \leq K \left[ \ph(x,\abs{\nabla u_n} )  +  \ph(x,\abs{\nabla u} )  \right] ,
	\end{align*}
	and therefore the sequence $\ph(\cdot,\abs{\nabla u_n - \nabla u}) $ is uniformly integrable. Once again, it is straightforward to check that this sequence converges in measure to zero, so Vitali's theorem implies that 
	\begin{align*}
		\lim_{n\to \infty} \modph{ \nabla u_n - \nabla u }
		= \lim_{n\to \infty} \into \ph(x,\abs{\nabla u_n - \nabla u})  \dx 
		= 0.
	\end{align*}
	By the assumption \textnormal{(Poin)}, this means that $u_n \to u$ in $\Wphzero$.
\end{proof}

\begin{remark}
	Note that by \cite[Lemma 3.7.7]{Harjulehto-Hasto-2019}, if $\ph$ satisfies condition (A0) (see the definition in that book) together with \textnormal{(Mono1)}, then $\Lph \hookrightarrow \Lp{1}$.
\end{remark}

We also prove that the operator $A_\ph$ is bounded and coercive.

\begin{proposition}
	\label{Prop:BoundedCoercive}
	Let $\ph$ be a generalized N-function such that it satisfies \textnormal{(aInc)} and \textnormal{(aDec)}. Then $A_\ph$ is bounded. Furthermore, if $\ph$ satisfies \textnormal{(Poin)}, it is coercive.
\end{proposition}

\begin{proof}
	For the boundedness, consider $u, v \in \Wphzero$ with $\normoneph{v} \leq 1$. By Proposition \ref{Prop:AbstractHoelder}, Lemma \ref{Le:ConjugateInequality}, Proposition \ref{Prop:AbstractoneNormModular}, and Lemma \ref{Le:EquivalenceFirstDerivative}, we have that
	\begin{align*}
		\abs{\langle A_\ph(u) , v \rangle} 
		& \leq \into \ph'(x,\abs{\nabla u}) \abs{\nabla v} \dx \\
		& \leq \into \ph^*( \ph'(x,\abs{\nabla u}) ) \dx + \modph{ \nabla v } \\
		& \leq \into \abs{\nabla u} \ph'(x,\abs{\nabla u}) \dx + 1 \\
		& \leq C \modph{\nabla u} + 1.
	\end{align*}
	Once again using Proposition \ref{Prop:AbstractoneNormModular}, we deduce that
	\begin{align*}
		\norm{A(u)}_* \leq C \max \{ \normoneph{u}^{\widetilde{p}} , \normoneph{u}^{\widetilde{q}}\} + 1
	\end{align*}
	for some $1 < \widetilde{p} \leq \widetilde{q} < \infty$.
	
	For the coercivity, consider $u \in \Wphzero$ with $\normonephzero{u} = \normph{\nabla u} \geq 1$. From Lemma \ref{Le:EquivalenceFirstDerivative} and Proposition \ref{Prop:AbstractNormModular}, we derive
	\begin{align*}
		\frac{\langle A(u) , u \rangle}{ \normph{ \nabla u } } 
		& = \frac{1}{ \normph{ \nabla u } } \into \ph'(x, \abs{\nabla u}) \abs{\nabla u} \dx \\
		& \geq  \frac{C \modph{\nabla u}}{ \normph{ \nabla u } }\\
		& \geq C \normph{ \nabla u }^{\widetilde{p} - 1}
	\end{align*}
	for some $1 < \widetilde{p} < \infty$.
\end{proof}

Finally, in the following result we gather all the properties together and we further prove that $A_\ph$ is a homeomorphism.

\begin{theorem}
	Let $\ph$ be a generalized N-function that satisfies \textnormal{(Mono1)}, \textnormal{(Mono2)}, and \textnormal{(Poin)}. Then, $I_\ph$ is $C^1$, $I_\ph'=A_\ph$, $A_\ph$ satisfies the (S$_+$)-property, and it is strictly monotone, bounded, coercive, and a homeomorphism. 
\end{theorem}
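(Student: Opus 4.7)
The plan is to assemble Propositions \ref{Prop:Differentiabilityph}, \ref{Prop:MonoAndSplus} and \ref{Prop:BoundedCoercive}, which jointly cover every claim except that $A_\ph$ is a homeomorphism, and then to derive the latter from a standard Minty--Browder plus (S$_+$) argument. Concretely, (Mono1) already contains the hypotheses of Proposition \ref{Prop:Differentiabilityph} (because $C^2(0,\infty)\subseteq C^1(0,\infty)$, and \textnormal{(aInc)}, \textnormal{(aDec)}, finiteness, and strict monotonicity of $\ph(x,\cdot)$ are all built into (Mono1)), so $I_\ph\in C^1$ with $I_\ph'=A_\ph$. Under (Mono1), (Mono2) and (Poin), Proposition \ref{Prop:MonoAndSplus} yields strict monotonicity and the (S$_+$)-property, while Proposition \ref{Prop:BoundedCoercive} (whose hypotheses are again contained in (Mono1) together with (Poin)) gives boundedness and coercivity.

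The only genuinely new piece is the homeomorphism claim. The space $\Wphzero$ is reflexive by Proposition \ref{Prop:AbstractBanach}, and $A_\ph$ has just been shown to be continuous (hence demicontinuous), strictly monotone and coercive. The Minty--Browder surjectivity theorem therefore produces a bijection $A_\ph\colon \Wphzero\to [\Wphzero]^*$, with injectivity coming from strict monotonicity. To see that $A_\ph^{-1}$ is continuous, I would argue in the classical way: take $y_n\to y$ in $[\Wphzero]^*$, set $u_n=A_\ph^{-1}(y_n)$ and $u=A_\ph^{-1}(y)$. From $\langle A_\ph(u_n),u_n\rangle=\langle y_n,u_n\rangle\leq \norm{y_n}_* \normonephzero{u_n}$ together with coercivity, the sequence $\{u_n\}$ is bounded, so reflexivity provides a subsequence with $u_{n_k}\weak \tilde u$ in $\Wphzero$, and
\begin{align*}
	\lim_{k\to\infty}\langle A_\ph(u_{n_k}),u_{n_k}-\tilde u\rangle
	=\lim_{k\to\infty}\langle y_{n_k},u_{n_k}-\tilde u\rangle = 0
\end{align*}
by the strong-weak pairing. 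The (S$_+$)-property then forces $u_{n_k}\to\tilde u$ strongly in $\Wphzero$. Continuity of $A_\ph$ gives $A_\ph(\tilde u)=y$, whence $\tilde u=u$ by injectivity; since this limit is independent of the chosen subsequence, the subsequence principle promotes the convergence to the whole sequence.

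I do not expect any substantive obstacle here: each of the three preceding propositions is already proved and (Mono1), (Mono2), (Poin) are strong enough to feed them all simultaneously, so the theorem is mostly a bookkeeping exercise. The only points that warrant mild care are verifying that the version of Minty--Browder being invoked requires only demicontinuity (which is implied by $I_\ph\in C^1$) and that the coercivity as formulated in the paper indeed forces $\{u_n\}$ to stay bounded whenever $\{y_n\}$ does in $[\Wphzero]^*$; both are textbook.
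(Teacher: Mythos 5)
Your proposal is correct and follows essentially the same route as the paper: the paper also collects Propositions \ref{Prop:Differentiabilityph}, \ref{Prop:MonoAndSplus} and \ref{Prop:BoundedCoercive} for all claims except the homeomorphism, and then invokes the Minty--Browder theorem with exactly the standard (S$_+$)-based continuity-of-inverse argument you spell out (the paper merely cites it rather than writing it down).
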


\begin{proof}
	All the properties except that it is a homeomorphism were already proven in Proposition \ref{Prop:Differentiabilityph}, Proposition \ref{Prop:MonoAndSplus}, and Proposition \ref{Prop:BoundedCoercive}. The proof that it is a homeomorphism follows with standard arguments using the Minty-Browder theorem. This theorem can be found in \cite[Theorem 26.A]{Zeidler-1990}, and those standard arguments can be found in \cite[Theorem 3.3]{Crespo-Blanco-Gasinski-Harjulehto-Winkert-2022} or \cite[Theorem 4.4]{Arora-Crespo-Blanco-Winkert-2023}.
\end{proof}

For completion, statements on all usual variants of the operator $A_\ph$ are given. The arguments in these results are essentially the same as above, the most significant difference is whether the assumption \textnormal{(Poin)} is needed or not. 

\begin{definition}
	Let $\ph$ be a generalized convex $\Phi$-function. We define the operator $B_\ph \colon \Wph \to \left[ \Wph \right] ^*$ as 
	\begin{align*}
		\langle B_{\ph} (u) , v \rangle 
		& = \into a_\ph (x,\nabla u) \cdot \nabla v +  a_\ph (x, u) v \dx \\
		& = \into \ph'(x,\abs{ \nabla u }) \frac{ \nabla u }{\abs{ \nabla u }} \cdot \nabla v 
		+ \ph'(x,\abs{ u }) \frac{ u }{\abs{ u }}  v  \dx,
	\end{align*}
	and the functional $J_{\ph} \colon \Wph \to \R$ as
	\begin{align*}
		J_{\ph} (u) = \into \ph(x,\abs{ \nabla u }) + \ph(x,\abs{ u }) \dx.
	\end{align*}
	
	We further define $\widetilde{A_\ph} \colon \Wph \to \left[ \Wph \right] ^*$ and $\widetilde{I_{\ph}} \colon \Wph \to \R$ as
	\begin{align*}
		\widetilde{A_\ph} (u) = A_\ph(u), \qquad \qquad \widetilde{I_{\ph}} (u) = I_{\ph} (u),
	\end{align*}
	and $\widetilde{B_\ph} \colon \Wphzero \to \left[ \Wphzero \right] ^*$ and $\widetilde{J_{\ph}} \colon \Wphzero \to \R$ as
	\begin{align*}
		\widetilde{B_\ph} (u) = B_\ph(u), \qquad \qquad \widetilde{J_{\ph}} (u) = J_{\ph} (u).
	\end{align*}
\end{definition}

\begin{theorem}
	Let $\ph$ be a generalized N-function that satisfies \textnormal{(Mono1)}, \textnormal{(Mono2)}, and \textnormal{(Poin)}. Then, $\widetilde{I_\ph}$ is $C^1$, $\widetilde{I_\ph}'=\widetilde{A_\ph}$, $\widetilde{A_\ph}$ satisfies the (S$_+$)-property, and it is monotone and bounded.
\end{theorem}

\begin{proof}
	For the the proof of the (S$_+$)-property, use the compact embedding $\Wph$ $\hookrightarrow \Lph$ to obtain that $\lim_{n\to \infty} \modph{u_n - u} = 0$. The rest of the proofs are identical to those of $A_\ph$.
\end{proof}

\begin{remark}
	In order to recover the strongest properties for the operator $\widetilde{A_\ph}$, one should define it instead in the space of zero mean functions, that is
	\begin{align*}
		L^{\ph}_{\langle \cdot \rangle } (\Omega) & = \left\lbrace u \in \Lph : \into u \dx = 0 \right\rbrace, \\
		W^{1,\ph}_{\langle \cdot \rangle } (\Omega) & = \left\lbrace u \in \Wph : \into u \dx = 0 \right\rbrace.
	\end{align*}
	By the H\"older inequality (see Proposition \ref{Prop:AbstractHoelder}), the mean operator is continuous both in $\Lph$ and $\Wph$, hence $L^{\ph}_{\langle \cdot \rangle } (\Omega)$ and $W^{1,\ph}_{\langle \cdot \rangle } (\Omega)$ are closed subspaces, and thus reflexive, separable spaces. By (Poin), we can analogously prove that we have a Poincar\'e inequality in $W^{1,\ph}_{\langle \cdot \rangle } (\Omega)$. So, one can repeat without any change the argument of Theorem \ref{Th:GeneralVersion} and obtain that $\widehat{A_\ph} \colon W^{1,\ph}_{\langle \cdot \rangle } (\Omega) \to \left[ W^{1,\ph}_{\langle \cdot \rangle } (\Omega) \right] ^*$ and $\widehat{I_\ph} \colon W^{1,\ph}_{\langle \cdot \rangle } (\Omega) \to \R$, defined as $A_\ph$ and $I_\ph$, satisfy: $\widehat{I_\ph}$ is $C^1$, $\widehat{I_\ph}'=\widehat{A_\ph}$, $\widehat{A_\ph}$ satisfies the (S$_+$)-property, and it is strictly monotone, bounded, coercive, and a homeomorphism. 
\end{remark}

\begin{theorem}
	Let $\ph$ be a generalized N-function that satisfies \textnormal{(Mono1)} and \textnormal{(Mono2)}. Then, $J_\ph$ is $C^1$, $J_\ph'=B_\ph$, $B_\ph$ satisfies the (S$_+$)-property, and it is strictly monotone, bounded, coercive, and a homeomorphism; and, $\widetilde{J_\ph}$ is $C^1$, $\widetilde{J_\ph}'=\widetilde{B_\ph}$, $\widetilde{B_\ph}$ satisfies the (S$_+$)-property, and it is strictly monotone, bounded, coercive, and a homeomorphism. 
\end{theorem}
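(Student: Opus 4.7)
My plan is to mirror the three propositions already proved for $A_\ph$ and to observe that the zeroth order term appearing in $B_\ph$ already supplies the control on $\modph{u_n-u}$ that the assumption \textnormal{(Poin)} was previously used to extract. Consequently, no Poincar\'e-type inequality is needed, and all the arguments of Section \ref{Beyond_the_power-law_case} run in parallel on the gradient slot and the value slot.

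First, I would establish $C^1$-regularity with $J_\ph'=B_\ph$ by applying the Gateaux derivative computation of Proposition \ref{Prop:Differentiabilityph} separately to the two summands $\ph(x,\abs{\nabla u})$ and $\ph(x,\abs{u})$. The dominated convergence / Vitali argument, powered by Lemma \ref{Le:EquivalenceFirstDerivative}, Lemma \ref{Le:ConjugateInequality} and the $\Delta_2$-condition (which by Lemma \ref{Le:equivalences} is equivalent to \textnormal{(aDec)}), applies to each summand verbatim. Boundedness of $B_\ph$ is a twofold copy of Proposition \ref{Prop:BoundedCoercive}: H\"older via Proposition \ref{Prop:AbstractHoelder} on each term, followed by a conversion of modulars to norms via Proposition \ref{Prop:AbstractoneNormModular}. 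Coercivity follows from
\begin{align*}
\langle B_\ph(u),u\rangle = \into \ph'(x,\abs{\nabla u})\abs{\nabla u}+\ph'(x,\abs{u})\abs{u}\dx \geq C\,\modoneph{u}
\end{align*}
via Lemma \ref{Le:EquivalenceFirstDerivative}, after which Proposition \ref{Prop:AbstractoneNormModular} delivers a power-type lower bound in $\normoneph{u}$.

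The central step is monotonicity and the (S$_+$)-property. Applying Lemma \ref{Le:GeneralizedMono} (via Remark \ref{Re:GeneralizedMono}) both to the gradient pair in $\R^N$ and to the scalar pair in $\R$ yields the pointwise bound
\begin{align*}
&\l(a_\ph(x,\nabla u)-a_\ph(x,\nabla v)\r)\cdot(\nabla u-\nabla v) + \l(a_\ph(x,u)-a_\ph(x,v)\r)(u-v) \\
&\quad \geq C_x\l[\ph''(x,\abs{\nabla u}+\abs{\nabla v})\abs{\nabla u-\nabla v}^2 + \ph''(x,\abs{u}+\abs{v})(u-v)^2\r].
\end{align*}
Integration gives monotonicity, and strict monotonicity is now automatic: if the integral vanishes then both integrands vanish a.e., forcing $\nabla u=\nabla v$ and $u=v$ a.e., without any embedding hypothesis. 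For the (S$_+$)-property I would run the Claim of Proposition \ref{Prop:MonoAndSplus} twice, once on each summand, using the dichotomy in \textnormal{(Mono2)} between almost increasing and almost decreasing $\ph''(x,\cdot)$ to extract a.e.\ convergence of $\nabla u_{n_k}$ to $\nabla u$ and of $u_{n_k}$ to $u$; the subsequence principle then yields a.e.\ convergence on the full sequence for both.

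To upgrade this to modular convergence, I would use Young's inequality (Lemma \ref{Le:YoungIneqph}) exactly as in Proposition \ref{Prop:MonoAndSplus} to derive $\limsup_n J_\ph(u_n)\leq J_\ph(u)$, and then decouple: Fatou's lemma applied to each of the two $\Phi$-integrals combined with the elementary inequality $\liminf(a_n+b_n)\geq \liminf a_n+\liminf b_n$ forces $\into\ph(x,\abs{\nabla u_n})\dx\to\into\ph(x,\abs{\nabla u})\dx$ and the analogous limit for $\abs{u_n}$ to hold individually. Vitali's theorem then yields $\modph{\nabla u_n-\nabla u}\to 0$ and $\modph{u_n-u}\to 0$ separately, whence $u_n\to u$ in $\Wph$. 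The homeomorphism assertion is the standard Minty-Browder routine referenced in the previous theorem. The statements for $\widetilde{B_\ph}$ and $\widetilde{J_\ph}$ on $\Wphzero$ are identical, since the argument never appealed to \textnormal{(Poin)} in the first place. The only delicate point I foresee is the decoupling of the two summands in the limsup/liminf step, but the individual nonnegativity and lower semicontinuity of each $\Phi$-integral make the splitting essentially mechanical.
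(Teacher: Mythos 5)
Your proposal is correct and follows essentially the same route as the paper, whose proof is precisely to repeat the arguments for $A_\ph$ separately on the gradient part and the zeroth-order part, with the latter replacing the role of \textnormal{(Poin)} in controlling $\modph{u_n-u}$. The one detail you add beyond the paper's sketch, the limsup/liminf decoupling of the two $\Phi$-integrals via Fatou and $\liminf(a_n+b_n)\geq\liminf a_n+\liminf b_n$ (using finiteness of each limit modular), is exactly the right way to make the "separate" treatment rigorous.
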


\begin{proof}
	Repeat the arguments made for $A_\ph$ separately for the part with gradients and the parts without gradients.
\end{proof}

The assumptions in the previous results can actually be further generalized. One can prove the following result and the corresponding cases with $\widetilde{A_\ph}$, $B_\ph$, and $\widetilde{B_\ph}$.

\begin{theorem} \label{Th:GeneralVersion}
	Let $\ph$ be a generalized N-function such that it satisfies \textnormal{(aInc)}, \textnormal{(aDec)}, $\ph(x,\cdot) \in C^1(0,\infty)$ for a.a.\,$x \in \Omega$, and \textnormal{(Poin)}.
	
	Further assume that $\ph=\ph_1 + \ph_2$, where $\ph_1$ is a generalized N-function that satisfies \textnormal{(Mono1)} and \textnormal{(Mono2)}, and $\ph_2$ satisfies for all $\eta, \xi \in \R^N$ and a.a. $x \in \Omega$,
	\begin{align*}
		( a_{\ph_2}(x,\eta) -  a_{\ph_2}(x,\xi) ) \cdot ( \eta - \xi ) \geq 0.
	\end{align*}
	
	Then, $I_\ph$ is $C^1$, $I_\ph'=A_\ph$, $A_\ph$ satisfies the (S$_+$)-property, and it is strictly monotone, bounded, coercive, and a homeomorphism. 
\end{theorem}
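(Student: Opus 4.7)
The plan is to leverage the additive decomposition $A_\ph = A_{\ph_1} + A_{\ph_2}$ and to observe that the ``non-monotonicity'' parts of the statement were established in Proposition \ref{Prop:Differentiabilityph} and Proposition \ref{Prop:BoundedCoercive} purely from the basic structural hypotheses on $\ph$ (convex generalized $\Phi$-function, strictly increasing, finite-valued, \textnormal{(aInc)}, \textnormal{(aDec)}, $C^1$ in the second variable, plus \textnormal{(Poin)}), all of which are assumed here. Hence $I_\ph \in C^1$ with $I_\ph' = A_\ph$ follows directly from Proposition \ref{Prop:Differentiabilityph}, while boundedness and coercivity follow from Proposition \ref{Prop:BoundedCoercive}. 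Once continuity, monotonicity, coercivity and the (S$_+$)-property have been established, the homeomorphism conclusion is the standard Minty--Browder argument already used in the previous results.

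For monotonicity I would split
\begin{align*}
\langle A_\ph(u) - A_\ph(v), u - v \rangle = \langle A_{\ph_1}(u) - A_{\ph_1}(v), u - v\rangle + \langle A_{\ph_2}(u) - A_{\ph_2}(v), u - v \rangle,
\end{align*}
observing that the first summand is nonnegative by Lemma \ref{Le:GeneralizedMono} and Remark \ref{Re:GeneralizedMono} applied to $\ph_1$ (which enjoys \textnormal{(Mono1)}), and the second is nonnegative by hypothesis on $\ph_2$. For strict monotonicity, vanishing of the sum forces both summands to vanish; the first one yields $\ph_1''(x, \abs{\nabla u} + \abs{\nabla v}) \abs{\nabla u - \nabla v}^2 = 0$ a.e., and the two-case argument using \textnormal{(Mono2)} for $\ph_1$ exactly as in Proposition \ref{Prop:MonoAndSplus} gives $\nabla u = \nabla v$ a.e., from which \textnormal{(Poin)} delivers $u = v$.

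The main work is the (S$_+$)-property. Given $u_n \weak u$ in $\Wphzero$ with $\limsup_n \langle A_\ph(u_n), u_n - u\rangle \leq 0$, monotonicity of both $A_{\ph_1}$ and $A_{\ph_2}$ together with weak convergence forces
\begin{align*}
\lim_n \langle A_{\ph_1}(u_n) - A_{\ph_1}(u), u_n - u\rangle = 0, \qquad \lim_n \langle A_{\ph_2}(u_n) - A_{\ph_2}(u), u_n - u\rangle = 0.
\end{align*}
From the first limit I would repeat verbatim the Claim of Proposition \ref{Prop:MonoAndSplus} applied to $\ph_1$: extract a subsequence along which $\ph_1''(\cdot, \abs{\nabla u} + \abs{\nabla u_{n_k}}) \abs{\nabla u - \nabla u_{n_k}}^2 \to 0$ a.e., and split $\Omega$ according to whether $\ph_1''(x,\cdot)$ is almost increasing or almost decreasing to conclude $\nabla u_{n_k} \to \nabla u$ a.e.; the subsequence principle then yields $\nabla u_n \to \nabla u$ in measure. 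Note that no information from the $\ph_2$-piece is needed or available at this stage.

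The final step upgrades convergence in measure to norm convergence in $\Wphzero$, and here one must work with $\ph$ rather than $\ph_1$ since only $\ph$ controls the target norm. Lemma \ref{Le:YoungIneqph} applied to $\ph$ gives $\limsup_n I_\ph(u_n) \leq I_\ph(u)$, which together with Fatou's lemma yields $\lim_n I_\ph(u_n) = I_\ph(u)$; the converse of Vitali's theorem then forces uniform integrability of $\ph(\cdot, \abs{\nabla u_n})$, and the \textnormal{(aDec)}/$\Delta_2$ bound $\ph(x, \abs{\nabla u_n - \nabla u}) \leq K[\ph(x, \abs{\nabla u_n}) + \ph(x, \abs{\nabla u})]$ transfers uniform integrability to $\ph(\cdot, \abs{\nabla u_n - \nabla u})$, so Vitali together with \textnormal{(Poin)} delivers $u_n \to u$ in $\Wphzero$. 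The main obstacle is the subtle asymmetry in the decomposition: a.e.\,convergence of $\nabla u_n$ can be squeezed only out of the $\ph_1$-part, yet the Vitali/uniform-integrability estimates that close the argument must involve the full $\ph$; it is precisely the fact that $\ph_1$ carries the strong pointwise hypotheses \textnormal{(Mono1)}--\textnormal{(Mono2)} while $\ph_2$ need only be monotone in the abstract sense that makes the decomposition strategy go through.
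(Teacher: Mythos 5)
Your proposal is correct and matches the paper's intended argument, which is exactly to keep the proofs of differentiability, boundedness, coercivity and the homeomorphism property unchanged (they only use the structural hypotheses on the full $\ph$ plus \textnormal{(Poin)}), and in the monotonicity and (S$_+$) steps to drop the nonnegative $\ph_2$-term so that the \textnormal{(Mono1)}--\textnormal{(Mono2)} machinery is applied to $\ph_1$ alone, while the concluding Young/Fatou/Vitali step runs with the full $\ph$. The only cosmetic remark is that for strict monotonicity you do not actually need the \textnormal{(Mono2)} two-case argument: \textnormal{(Mono1)} already forces $\ph_1''(x,s)>0$ for $s>0$, so the pointwise vanishing of $\ph_1''(x,\abs{\nabla u}+\abs{\nabla v})\abs{\nabla u-\nabla v}^2$ gives $\nabla u=\nabla v$ directly, as in Proposition \ref{Prop:MonoAndSplus}.
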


\begin{proof}
	All proofs are identical except the monotonicity and the (S$_+$)-property, in which we use the properties of $\ph_2$ to skip it and only work with $\ph_1$.
\end{proof}

As a closure for this work, we provide some examples of functions satisfying the assumptions of the theorems. First, here is a list of already studied generalized N-functions that still fit in this setting.

\begin{example} ~	
	\begin{enumerate}
		\item[{\textnormal{(a)}}] $\ph(x,t) = t^{p(x)}$, where $p \in C(\close)$ with $p(x)>1$ for all $x \in \close$, satisfies \textnormal{(Mono1)}, \textnormal{(Mono2)}, and \textnormal{(Poin)}.
		\item[\textnormal{(b)}] $\ph(x,t) = t^{p(x)} + \mu(x) t^{q(x)}$, where $p,q \in C(\close)$ with $1<p(x)\leq q(x)$ for all $x \in \close$ and $0 \leq \mu \in \Lp{1}$, satisfies \textnormal{(Mono1)}, but \textnormal{(Mono2)} and \textnormal{(Poin)} could be false. If $q(x) < p^*(x)$ for all $x \in \Omega$ and $\mu \in \Lp{\infty}$, then \textnormal{(Poin)} is also satisfied. \textnormal{(Mono2)} would require much stricter assumptions, like $2 \leq p(x)$ or $q(x) \leq 2$ for all $x \in \Omega$. However, we can avoid that by splitting $\ph$ into $\ph_1(x,t)=t^{p(x)}$ and $\ph_2(x,t) = \mu(x) t^{q(x)}$, and then the assumptions of Theorem \ref{Th:GeneralVersion} are satisfied and it yields the same conclusion. 
		\item[\textnormal{(c)}] $\ph(x,t) = t^{p(x)} + \mu(x) t^{q(x)} \log(e+t)$ is a very similar situation to \textnormal{(b)}.
	\end{enumerate}
\end{example}

We further provide one more noteworthy example, that is, one novel generalized N-function. As far as the author knows, the (S$_+$)-property had never been proven for this generalized N-function before.

\begin{example} ~	
	\begin{enumerate} 
		\item[\textnormal{(d)}] $\ph(x,t) = t^{p(x)}\log(e+t)$, where $p \in C(\close)$ with $p(x)>1$ for all $x \in \close$ satisfies satisfies \textnormal{(Mono1)}, \textnormal{(Mono2)}, and \textnormal{(Poin)}. Note that for \textnormal{(Mono2)}, $\ph''(x,\cdot)$ is increasing where $p(x) \geq 2$, decreasing where $p(x)$ is small enough, and almost decreasing in the rest.
	\end{enumerate}
\end{example}

%********************************************************************
\section*{Acknowledgments}
%********************************************************************
The author was funded by the Deutsche Forschungsgemeinschaft (DFG, German Research Foundation) under Germany's Excellence Strategy - The Berlin Mathematics Research Center MATH+ and the Berlin Mathematical School (BMS) (EXC-2046/1, project ID: 390685689).

%********************************************************************

\end{document}